\newtheorem{thm}{Theorem}
\newtheorem{lemma}[thm]{Lemma}
\newcommand{\R}{\mathbb{R}}
\newcommand{\Z}{\mathbb{Z}}
\newcommand{\E}{\mathbb{E}}
\newcommand{\N}{\mathbb{N}}
\newcommand{\s}{\mathbb{S}}
\newcommand{\T}{\mathbb{T}}
\renewcommand{\r}{\mathfrak{R}}
\newcommand{\inprod}[2]{\left\langle #1, #2 \right\rangle}
\renewcommand{\P}{\mathbb{P}}
\newcommand{\var}{\mathrm{Var}}
\newcommand{\dv}{d\overline{\rm vol}}
\DeclareMathOperator{\tr}{Tr\,}
\newcommand{\n}{\mathfrak{N}}
\newcommand{\V}{\mathcal{V}}
\begin{document}

\title[Eigenfunctions of the Laplacian]{On the approximate normality of eigenfunctions of the Laplacian}
\author[E.\ Meckes]{Elizabeth Meckes}
\email{esmeckes@math.cornell.edu}
\begin{abstract}
The main result of this paper is a bound on the distance between the
distribution of an eigenfunction of the Laplacian on a compact
Riemannian manifold and the Gaussian distribution.
If $X$ is a random point on a manifold $M$ and $f$ is an eigenfunction
of the Laplacian with $L^2$-norm one and eigenvalue $-\mu$, 
then $$d_{TV}(f(X),Z)\le\frac{2}{\mu}\E\big|\|\nabla f(X)\|^2-\E\|\nabla f(X)
\|^2\big|.$$  This result is applied to construct specific examples of
spherical harmonics of arbitrary (odd) degree which are close to 
Gaussian in distribution.  A second application is given to random 
linear combinations of eigenfunctions on flat tori.
\end{abstract}

\maketitle


\section{Introduction}
A question which has arisen and been studied often throughout the history
of probability has been to understand the value distributions of certain
naturally occurring functions.  The simplest example of such a question
might be to understand the distribution of the number of times out of $N$
trials that a fair coin lands on heads.  A more geometric example 
(studied by Maxwell and Borel among others) is 
understanding the value distribution of the function $f:\s^{n-1}\to\R$
defined by $f(x)=x_1$.  Value distribution in this context refers
to endowing the sphere $\s^{n-1}$ with a natural probability
measure (normalized surface area), and considering the distribution of 
the random variable which is the push-forward of that natural measure on the 
sphere to $\R$ by the function $f$; i.e., if $X$ is a random point on 
the sphere, the value distribution of $f$ is the distribution of the 
random variable $f(X)$ on $\R$.

A particular class of functions of interest are eigenfunctions of the Laplacian
on Riemannian manifolds.  The main result of this paper is to 
give a sufficient condition for the approximate normality of these functions,
as follows.

\begin{thm}\label{intromain}
Let $M$ be a compact Riemannian manifold 
(without boundary), 
and
$f$ an eigenfunction
for the Laplacian on $M$ with eigenvalue $-\lambda<0$, normalized
so that $\frac{1}{vol(M)}\int_Mf^2=1$.  Let $X$ be a random (i.e., distributed 
according to normalized volume measure) point of $M$.
Then
$$d_{TV}(f(X),Z)\le\frac{2}{\lambda}\E\Big|\|\nabla f(X)\|^2-\E\|\nabla f(X)\|
^2\Big|,$$
where $d_{TV}$ denotes the total variation distance between random 
variables and $Z$ is a standard Gaussian random variable on $\R$.
\end{thm}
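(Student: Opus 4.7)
The plan is to apply Stein's method for normal approximation in total variation distance. Stein's characterization identifies $Z \sim N(0,1)$ as the unique law satisfying $\E[g'(Z) - Z g(Z)] = 0$ for all sufficiently regular $g$; moreover, for each Borel set $A \subseteq \R$, the Stein equation $g'(x) - x g(x) = \I_A(x) - \P(Z \in A)$ admits a solution $g_A$ satisfying the classical bound $\|g_A'\|_\infty \le 2$. Hence
\[
d_{TV}(f(X), Z) = \sup_A \big|\E[g_A'(f(X)) - f(X) g_A(f(X))]\big|,
\]
and it suffices to bound the right-hand side uniformly in $A$.

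The key step is to integrate by parts on $M$, using $\Delta f = -\lambda f$ and the fact that $M$ has no boundary. Writing $\nabla(g \circ f) = g'(f)\nabla f$ and applying Green's identity gives
\[
\int_M f(x)\, g(f(x))\, dx = -\frac{1}{\lambda} \int_M (\Delta f)(x)\, g(f(x))\, dx = \frac{1}{\lambda} \int_M g'(f(x))\,\|\nabla f(x)\|^2\, dx.
\]
Normalizing by $\vol(M)$ yields $\E[f(X) g(f(X))] = \lambda^{-1}\, \E[g'(f(X)) \|\nabla f(X)\|^2]$. Specializing this identity to $g(t) = t$ and using $\E f(X)^2 = 1$ gives the incidental computation $\E\|\nabla f(X)\|^2 = \lambda$. (The first moment is also automatic: $\int_M f = -\lambda^{-1}\int_M \Delta f = 0$, so comparison with a centered Gaussian is legitimate.)

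Combining these two consequences of integration by parts,
\[
\E\big[g'(f(X)) - f(X) g(f(X))\big] = \E\Big[g'(f(X))\Big(1 - \tfrac{1}{\lambda}\|\nabla f(X)\|^2\Big)\Big],
\]
and bounding pointwise by $\|g'\|_\infty$ produces
\[
\big|\E[g'(f(X)) - f(X) g(f(X))]\big| \le \frac{\|g'\|_\infty}{\lambda}\, \E\big|\|\nabla f(X)\|^2 - \E\|\nabla f(X)\|^2\big|.
\]
Taking the supremum over $A$ and using $\|g_A'\|_\infty \le 2$ gives the stated inequality.

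I do not anticipate a serious obstacle: once one notices that the eigenvalue equation turns the usual Stein identity $\E[g'(Z)] = \E[Z g(Z)]$ into a comparison between $\E[g'(f(X))]$ and $\lambda^{-1} \E[g'(f(X)) \|\nabla f(X)\|^2]$, everything reduces to the standard Stein's method bound for indicator test functions. The only mild care required is verifying that the boundary terms in Green's identity vanish, which is automatic since $M$ is compact without boundary, and tracking that the normalization gives precisely $\E\|\nabla f(X)\|^2 = \lambda$ rather than $\lambda\|f\|_{L^2}^2$ for an unnormalized eigenfunction.
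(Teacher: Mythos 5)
Your proof is correct, and it takes a genuinely different route from the paper. The paper derives Theorem~\ref{intromain} as a corollary of Theorem~\ref{absmfld}, whose proof uses Stein's method of \emph{exchangeable pairs}: one constructs a pair $(X,X_\epsilon)$ by moving a random distance $\epsilon$ along a geodesic in a uniformly random direction $V\in S_XM$, checks exchangeability via invariance of Liouville measure under the geodesic flow, and then verifies, via a Taylor expansion of $f\circ\gamma$ and the classical identity $\lim_{\epsilon\to0}\epsilon^{-2}\E[f(X_\epsilon)-f(X)\mid X]=\tfrac{1}{2n}\Delta_gf(X)$, the infinitesimal version of the regression and variance conditions needed by the abstract approximation theorem (Theorem~\ref{abscont}). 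Your argument instead goes straight to the Stein equation $g_A'(x)-xg_A(x)=\I_A(x)-\P(Z\in A)$ with $\|g_A'\|_\infty\le 2$, and replaces the exchangeable-pair machinery with a single application of Green's identity on a closed manifold: $\E[f(X)g(f(X))]=\lambda^{-1}\E[g'(f(X))\|\nabla f(X)\|^2]$, which specializes (taking $g(t)=t$) to $\E\|\nabla f(X)\|^2=\lambda$ and then yields the bound directly. This is more elementary and shorter, bypassing the geometric construction, the Liouville measure, and the abstract theorem entirely. What the paper's approach buys is that it scales immediately to the generalization in Theorem~\ref{absmfld}, where $f$ is a linear combination of eigenfunctions with nearby eigenvalues: there $\Delta_g f\neq -\mu f$, and the exchangeable-pair framework is set up to quantify that defect as an extra error term $E'$. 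Your method could also be extended to that setting by writing $\Delta_g f=-\mu f+\sum a_i(\mu-\mu_i)f_i$ and tracking the resulting remainder, but you would essentially be rediscovering the second error term by hand. One minor technical point worth making explicit in a final write-up: the Stein solution $g_A$ is only Lipschitz (so $g_A'$ exists a.e.\ rather than everywhere), and one should note that $g_A\circ f\in W^{1,2}(M)$ with weak gradient $g_A'(f)\nabla f$, so that Green's identity applies; this is standard but deserves a sentence.
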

That is, the value distribution of $f$ is close to Gaussian if the
expected distance between the random variable $\|\nabla f(X)\|^2$ and
its mean is small, relative to the size of the eigenvalue.  It should 
be pointed out that $\E\|\nabla f(X)\|^2=\lambda$ (this is an easy consequence
of Stokes' theorem), thus the bound on the right-hand side is not always
non-trivial.  However, it will be seen in examples below that there is 
sometimes sufficient cancellation to obtain a normal limit.

\medskip

Eigenfunctions of the Laplacian have arisen as important and 
natural objects in several branches of analysis; in particular, they 
of central interest
in quantum chaos, whose basic problem is the understanding of the quantization
of a classical Hamiltonian system with chaotic dynamics.  
See \cite{sarnak} for a readable introduction.  A key question
in the chaotic case in whether individual eigenfunctions behave like random
waves (see \cite{berry}).  It is noted in \cite{sarnak} that, were this 
the case, it would imply that the distribution of high eigenfunctions 
 would be approximately Gaussian.
In connection with this conjecture,
Hejhal and various coauthors (see, e.g., \cite{hej}) and 
Aurich and Steiner \cite{as} have provided
numerical evidence indicating that on certain hyperbolic manifolds,
the distributions of eigenfunctions become close to Gaussian as
the eigenvalue tends to infinity.  Theorem \ref{intromain} above 
provides a different perspective on the approximate normality of
eigenfunctions.  In the applications given below, asymptotic results
are obtained as the dimension of the manifold in question tends to infinity.
However, there is no {\em a priori} reason that Theorem \ref{intromain} 
could not also be applied in the case of a fixed manifold, as the 
eigenvalue tends to infinity.

The results of this paper can be seen as an abstraction of earlier 
work of the author \cite{morth} and joint work with M. Meckes \cite{MM}.
In those papers, it was shown that the value distributions of linear functions
on the orthogonal group and on certain convex bodies are 
close to Gaussian, using similar methods to those used here.  
The proof of Theorem \ref{intromain} makes clear that eigenfunctions
of the Laplacian are the natural candidates (at least for the techniques
used here) to replace linear
functions on more general Riemannian manifolds.  The main result of 
\cite{morth} on the orthogonal group and the results of \cite{MM} in 
the case of the sphere can be recovered as applications of Theorem 
\ref{intromain}.

\bigskip

The contents of this paper are as follows.  
Notation and conventions are
given in Section \ref{notation} below.  
Section \ref{abstract} gives the proof of the main theorem,
in fact proving a slightly more general version which treats linear 
combinations
of eigenfunctions whose eigenvalues are close together.  Section \ref{sphere}
gives applications of Theorem \ref{intromain} to the distribution of
certain eigenfunctions on the sphere $\s^{n-1},$ and  
section \ref{torus}
contains applications of the generalization of Theorem \ref{intromain} from
section \ref{abstract} to random eigenfunctions on the torus $\T^n$.
The unifying idea in these two cases is that in situations 
in which eigenspaces have high dimension, or the closely related
situation in which there are many low dimensional eigenspaces whose 
eigenvalues are very close together, Gaussian distributions occur when 
taking linear combinations of many orthogonal eigenfunctions.  

In the case of the sphere, we have the following result for the second-order
eigenfunctions.
\begin{thm}\label{quad-intro}
Let $g(x)=\sum_{i,j}a_{ij}x_ix_j,$ where $A=(a_{ij})_{i,j=1}^n$ is a
symmetric matrix with $\tr(A)=0$.  Let $f=Cg$, where $C$ is chosen such that
$\|f\|_2=1$ when $f$ is considered as a function on $S^{n-1}$.  Let $W=f(X),$ 
where $X$ is a random point on $S^{n-1}$.  If $d$ is the 
vector in $\R^n$ whose entries are the eigenvalues of $A$, then 
\begin{equation*}
d_{TV}(W,Z)\le \sqrt{6}\left(\frac{\|d\|_4}{\|d\|_2}\right)^2,\end{equation*}
where $\|d\|_p=\left(\sum_i|d_i|^p\right)^{1/p}.$
\end{thm}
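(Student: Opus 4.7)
The plan is to apply Theorem~\ref{intromain} directly. The three ingredients needed are: the eigenvalue associated with $f$, an expression for $\|\nabla f\|^2$ on $S^{n-1}$, and a bound on $\E\bigl|\|\nabla f\|^2 - \E\|\nabla f\|^2\bigr|$.

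First, since $\Delta_{\R^n} g = 2\tr A = 0$, the polynomial $g$ is harmonic of degree $2$, so $f|_{S^{n-1}}$ is an eigenfunction of the spherical Laplacian with eigenvalue $-\lambda = -2n$. By orthogonal invariance of the uniform measure on $S^{n-1}$ I may assume $A = \mathrm{diag}(d_1,\dots,d_n)$, so that $g(x)=\sum_i d_i x_i^2$. A direct moment computation on the sphere, using $\sum d_i = 0$, yields $\E g(X)^2 = 2\|d\|_2^2/(n(n+2))$, which fixes $C^2 = n(n+2)/(2\|d\|_2^2)$. The spherical gradient of $f$ is $\nabla f = 2C(Ax - g(x)\,x)$, whence $\|\nabla f\|^2 = 4C^2\bigl(x^T A^2 x - g(x)^2\bigr)$. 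Theorem~\ref{intromain} then gives $d_{TV}(W,Z) \le \tfrac{1}{n}\E\bigl|\|\nabla f\|^2 - 2n\bigr|$, and I would apply Cauchy--Schwarz to reduce matters to showing $\var(\|\nabla f\|^2) \le 6n^2(\|d\|_4/\|d\|_2)^4$.

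For the variance computation I see two natural routes. The first is direct moment expansion: using the representation $X \stackrel{d}{=} G/\|G\|$ with $G\sim N(0,I_n)$ and $X$ independent of $\|G\|$, moments on the sphere translate into Gaussian moments of polynomials in $\sum d_i G_i^2$ and $\sum d_i^2 G_i^2$, which are sums of independent scaled $\chi_1^2$ variables whose cumulants are available in closed form, and for which $\sum d_i = 0$ produces the necessary cancellations. The cleaner route uses the identity $\|\nabla f\|^2 = \tfrac12\Delta(f^2) + \lambda f^2$, which follows from the product rule $\Delta(f^2) = 2f\Delta f + 2\|\nabla f\|^2$ and $\Delta f = -\lambda f$. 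Decomposing $f^2 = 1 + p_2 + p_4$ into spherical harmonic components of degrees $0,2,4$, a short calculation using the eigenvalues $2n$ and $4(n+2)$ simplifies the identity to
\begin{equation*}
\|\nabla f\|^2 - 2n = n\,p_2 - 4\,p_4,
\end{equation*}
so by orthogonality of harmonics of distinct degree, $\var(\|\nabla f\|^2) = n^2 \|p_2\|_2^2 + 16\|p_4\|_2^2$. Here $p_2$ is a scalar multiple of the traceless quadratic $x^T\bigl(A^2 - (\|d\|_2^2/n)I\bigr)x$, whose $L^2$-norm is immediately computable from its eigenvalues $d_i^2 - \|d\|_2^2/n$, and $\|p_4\|_2^2$ can then be extracted from $\E f^4$ by Parseval.

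The main obstacle is the algebraic bookkeeping in this variance estimate: the cancellations needed to get the constant into the clean form $6n^2(\|d\|_4/\|d\|_2)^4$ use the hypothesis $\sum d_i = 0$ repeatedly, and at the end the elementary inequality $\|d\|_4^4/\|d\|_2^4 \ge 1/n$ (Cauchy--Schwarz applied to $d$) is needed to absorb a lower-order correction into the leading term. Once that bound is in hand, Cauchy--Schwarz combined with Theorem~\ref{intromain} yields $d_{TV}(W,Z) \le \tfrac{1}{n}\sqrt{\var(\|\nabla f\|^2)} \le \sqrt{6}\,(\|d\|_4/\|d\|_2)^2$, as required.
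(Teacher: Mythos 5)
Your route is genuinely different from the paper's, and up to the final numerical estimate it is cleaner. The paper does not decompose $f^2$ into spherical harmonics at all. Instead it uses the fact that the spherical gradient is a projection of the Euclidean one to write $\E\|\nabla_{S^{n-1}}g\|^4 \le \E\|\nabla_{\R^n}g\|^4$, exploits the linearity $\nabla_{\R^n}g = 2Ax$ to compute $\E\|\nabla_{\R^n}g\|^4 = \frac{16}{n(n+2)}\bigl[\|d\|_2^4 + 2\|d\|_4^4\bigr]$ from Lemma~\ref{sph-ints}, and obtains the (non-sharp) bound $\var\bigl(\|\nabla f\|^2\bigr) \le 8n^2 r^2 + 8n(1+2r^2)$, with $r^2 := \|d\|_4^4/\|d\|_2^4$. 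Your identity $\|\nabla f\|^2 - 2n = n\,p_2 - 4\,p_4$ is correct, and the orthogonality formula $\var(\|\nabla f\|^2) = n^2\,\E p_2^2 + 16\,\E p_4^2$ in fact yields the \emph{exact} variance; carried out using $\tr A = 0$, it gives
\begin{equation*}
\var\bigl(\|\nabla f\|^2\bigr) \;=\; \frac{8n^2(n+2)^2}{(n+4)(n+6)}\,r^2 \;-\; \frac{8n(n^2-12)}{(n+4)(n+6)},
\end{equation*}
which is sharper than the paper's bound.

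There is, however, a genuine gap at the last step. You propose to prove $\var(\|\nabla f\|^2) \le 6n^2 r^2$, and that inequality is simply false once $n$ is moderately large: with $d = (1,-1,0,\dots,0)$ one has $r^2 = 1/2$, and the exact variance above exceeds $6n^2 r^2 = 3n^2$ for all $n \ge 25$ (asymptotically $\var/n^2 \to 8r^2 > 6r^2$). The root cause is a factor-of-two mismatch already present in the paper: Theorem~\ref{intromain} states the prefactor as $\frac{2}{\lambda} = \frac{1}{n}$, but the paper's own proof of Theorem~\ref{sph} applies $\frac{1}{|\lambda|} = \frac{1}{2n}$, and the constant $\sqrt{6}$ is only consistent with the latter. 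With the prefactor $\frac{1}{2n}$ your exact variance together with $r^2 \ge 1/n$ gives $\var \le 8n^2 r^2$ and hence $d_{TV}(W,Z) \le \frac{1}{2n}\sqrt{\var} \le \sqrt{2}\,(\|d\|_4/\|d\|_2)^2$, which proves and indeed improves the stated bound. With the prefactor $\frac{1}{n}$ you have followed from the theorem statement, the best the exact variance allows is $\le 2\sqrt{2}\,(\|d\|_4/\|d\|_2)^2$, which exceeds $\sqrt{6}\,(\|d\|_4/\|d\|_2)^2$. So you should either trace the factor of two through the derivation of Theorem~\ref{intromain} (after which your decomposition yields the improved constant $\sqrt{2}$), or accept a constant larger than $\sqrt{6}$; as written, the step ``$\var \le 6n^2 r^2$'' cannot be filled in.
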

This theorem gives a fairly complete picture of when second-order 
eigenfunctions of the spherical Laplacian have Gaussian value distributions.
The bound on the right-hand side is of the order $\frac{1}{\sqrt{n}}$
in the case that all of the $d_i$ are of roughly the same size; e.g., when
$$d_i=\begin{cases}1&i\le\frac{n}{2}\\-1&\frac{n}{2}<i\le n\end{cases}$$
and $n$ is even.  In the opposite extreme case, e.g. for the function
$f(x)=c(x_1^2-x_2^2)$ for a suitable normalization constant $c$, the
bound is of order 1.  This is what is expected in this case; since it 
is known that $x_1$ and $x_2$ are asymptotically distributed as independent
Gaussians (as the dimension tends to infinity), 
the random variable $f(X)$ is approximately distributed
as $Z_1^2-Z_2^2$ for independent normal random variables $Z_1$ and $Z_2$
and $X$ a random point of a high-dimensional sphere.

In the higher degree case, we have the following theorem.
\begin{thm}\label{degl-intro}
There is a set $\{p_\ell^{(i)}\}_{i=1}^n$ of orthonormal eigenfunctions
of degree $\ell$ on $\s^{n-1}$ and a constant $c$ depending on $\ell$ 
such that 
for $p:\s^{n-1}\to\R$ defined by 
$$\sum_{i=1}^na_ip_\ell^{(i)}(x)$$ 
with $\sum a_i^2=1$, 
$$d_{TV}(p(X),Z)\le c\|{\bf a}\|_4^2.$$
If the vector of coefficients $\{a_i\}$ is chosen to be random and 
uniformly distribtuted on the sphere, then there is another constant $c'$
such that 
$$\E d_{TV}(p(X),Z)\le \frac{c'}{\sqrt{n}}.$$
\end{thm}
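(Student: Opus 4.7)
The plan is to apply Theorem \ref{intromain} with an explicit orthonormal family of zonal harmonics. I will take $q_\ell^{(n)}$ to be the Gegenbauer polynomial of degree $\ell$ with parameter $(n-2)/2$, rescaled so that $p_\ell^{(i)}(x):=q_\ell^{(n)}(x_i)$ has $L^2(\s^{n-1})$-norm one; each $p_\ell^{(i)}$ is then an eigenfunction of the spherical Laplacian with eigenvalue $\lambda=\ell(\ell+n-2)$. To verify that $\{p_\ell^{(i)}\}_{i=1}^n$ is orthonormal, I will use that $\ell$ is odd, so $q_\ell^{(n)}$ is an odd polynomial, and the reflection $x_i\mapsto -x_i$ is a measure-preserving isometry of $\s^{n-1}$ that reverses the sign of $p_\ell^{(i)}p_\ell^{(j)}$ for $i\neq j$, forcing the inner product to vanish.

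With $p(x)=\sum_i a_i q_\ell^{(n)}(x_i)$, combining Theorem \ref{intromain} with Cauchy--Schwarz reduces the task to showing $\var(\|\nabla p\|^2)\le C_\ell\, n^2\|{\bf a}\|_4^4$. Since each $p_\ell^{(i)}$ depends only on a single coordinate, the ambient gradient has orthogonal components $a_j(q_\ell^{(n)})'(x_j)$, and the spherical gradient satisfies
\begin{equation*}
\|\nabla p(x)\|^2=\sum_i a_i^2(q_\ell^{(n)})'(x_i)^2-\Big(\sum_i a_i(q_\ell^{(n)})'(x_i)\,x_i\Big)^2.
\end{equation*}
By exchangeability of the coordinates under the uniform measure on $\s^{n-1}$, the variance of the first (main) sum takes the form $\alpha\|{\bf a}\|_4^4+\gamma(1-\|{\bf a}\|_4^4)$, where $\alpha=\var((q_\ell^{(n)})'(x_1)^2)$ and $\gamma=\mathrm{Cov}((q_\ell^{(n)})'(x_1)^2,(q_\ell^{(n)})'(x_2)^2)$.

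The heart of the argument is the estimation of $\alpha$ and $\gamma$. Standard Gegenbauer asymptotics, combined with the fact that a single coordinate on $\s^{n-1}$ is typically of order $n^{-1/2}$, force the leading coefficient of $q_\ell^{(n)}$ to be of order $n^{\ell/2}$, so $(q_\ell^{(n)})'(x_1)^2$ has mean $O_\ell(n)$ and variance $O_\ell(n^2)$; thus $\alpha=O_\ell(n^2)$. Because two distinct coordinates on the sphere are nearly independent for large $n$, the leading terms in $\E[(q_\ell^{(n)})'(x_1)^2(q_\ell^{(n)})'(x_2)^2]$ cancel against $(\E[(q_\ell^{(n)})'(x_1)^2])^2$, losing a factor of $n$ and yielding $|\gamma|=O_\ell(n)$. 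Together with the inequality $\|{\bf a}\|_4^4\ge 1/n$ (an immediate consequence of Cauchy--Schwarz applied to $\|{\bf a}\|_2^2=1$), this gives $\alpha\|{\bf a}\|_4^4+\gamma(1-\|{\bf a}\|_4^4)\le C_\ell\, n^2\|{\bf a}\|_4^4$. The correction term in the displayed equation has variance $O_\ell(1)$ (its summands have mean zero and are uncorrelated across indices by the odd parity of $(q_\ell^{(n)})'(x_i)\,x_i$), so it is absorbed. Dividing $\sqrt{\var}$ by $\lambda$, which is of order $n$ for fixed $\ell$, yields the first inequality. For the random-${\bf a}$ statement, one computes $\E\|{\bf a}\|_4^4=3/(n+2)$ for ${\bf a}$ uniform on $\s^{n-1}$, and Jensen's inequality gives $\E\|{\bf a}\|_4^2\le(\E\|{\bf a}\|_4^4)^{1/2}=O(n^{-1/2})$. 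The main technical obstacle will be carrying out the Gegenbauer normalization and covariance computations with enough care to secure the precise $n$-dependencies in both $\alpha$ and $\gamma$ simultaneously.
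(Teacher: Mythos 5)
Your construction of the $p_\ell^{(i)}$, the reduction to bounding $\var(\|\nabla_{S^{n-1}}p\|^2)$, and the variance decomposition into diagonal ($\alpha$) and off-diagonal ($\gamma$) contributions all match the paper's approach in spirit (the paper actually bounds $\E\|\nabla_{S^{n-1}}p\|^4\le\E\|\nabla_{\R^n}p\|^4$ and subtracts $\mu^2=\ell^2(n+\ell-2)^2$ rather than directly computing $\var(S)$, but the cancellation required is the same). The computation $\E\|{\bf a}\|_4^4=3/(n+2)$ for the random-coefficient case is also correct.

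However, there is a genuine gap at the central step: the estimate $|\gamma|=\lvert\mathrm{Cov}(Y_1,Y_2)\rvert=O_\ell(n)$, where $Y_i=(q_\ell^{(n)})'(x_i)^2$. You assert this by appealing to ``near independence'' of two coordinates on $\s^{n-1}$, but this heuristic does not constitute a proof. Each $Y_i$ has magnitude of order $n$, while the dependence between $x_1$ and $x_2$ is at the level $O(1/n)$; these orders exactly collide, and whether $\mathrm{Cov}(Y_1,Y_2)$ is truly $O(n)$ rather than, say, $O(n^2)$, depends on a precise algebraic cancellation specific to the Gegenbauer structure. Establishing this cancellation is in fact the hardest part of the paper's proof: it requires expanding $\E[C^{n/2}_{\ell-1}(X_1)^2C^{n/2}_{\ell-1}(X_2)^2]$ into a multi-index sum via the Gegenbauer series \eqref{gegen} and Lemma \ref{sph-ints}, extracting the leading power of $n$ term by term, and then proving the identity
\begin{equation*}
2^{\ell-1}\sum_{k,m=0}^{\frac{\ell-1}{2}}\frac{(-1)^{k+m}\Gamma\left(\ell-k-m-\tfrac{1}{2}\right)\pi^{-1/2}}{2^{2k+2m}k!m!(\ell-1-2k)!(\ell-1-2m)!}=\frac{1}{(\ell-1)!},
\end{equation*}
which the paper handles via the Chu--Vandermonde hypergeometric identity in a dedicated appendix. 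Without this (or an equivalent argument), the claim $|\gamma|=O_\ell(n)$ is unsupported, and with it the entire bound $\var(\|\nabla p\|^2)\le C_\ell n^2\|{\bf a}\|_4^4$. You flag this as the ``main technical obstacle,'' but it is not merely a computation to be carried out with care; it is the identity on which the theorem rests, and your proposal gives no indication of how to establish it.
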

The functions $p_\ell^{(i)}$ are given explicity in Section \ref{sphere}.
The second statement of the theorem says that the typical linear 
combination of the $p_\ell^{(i)}$ has an approximately Gaussian value 
distribution.  As in the second-degree case, the explicit form
of the bound in the first statement of the theorem means that for 
specific choices of ${\bf a}$ whose components all have roughly the 
same magnitude; e.g., $a_i=\frac{1}{\sqrt{n}}$ for each $i$, the 
bound is of order $\frac{1}{\sqrt{n}}.$  

\smallskip

In the case of the torus, we have the following result on 
the distributions of random linear combinations of eigenfunctions.
\begin{thm}\label{torus-intro}
Let $B$ be a symmetric, positive definite matrix, and consider the
flat torus $\R^n/\Z^n$ with the metric given by $(x,y)_B=\inprod{Bx}{y}.$
Define a random function $f$ on $(\T^n,B)$ as follows.  Let $\{a_v\}_{v\in\V}$ 
be a random point of the sphere of radius $\sqrt{2}$ in $\R^\V$, where $\V$
is a finite set of vectors of $\R^n$.  Suppose that the set $\V$ is such
that if $v\in\V$, then $Bv\in\Z^n.$  Define $f$ by
$$f(x)=\Re\left(\sum_{v\in\V}a_ve^{2\pi i\inprod{Bv}{x}}\right).$$
Then for $\mu_v=(2\pi\|v\|_B)^2$ and $\mu>0$,
$$\E d_{TV}(f(X),Z)\le\frac{1}{|\mu|}\left[\sqrt{\frac{8(2\pi)^4}
{|\V|(|\V|+2)}\sum_{v,w\in\V}(v,w)_B^2}+
\left(2\sqrt{2}+\sqrt{\pi}\right)\sqrt{\frac{1}{|\V|}
\sum_{v\in\V}(\mu_v-\mu)^2}\right].$$
\end{thm}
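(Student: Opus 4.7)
The plan is to apply the generalization of Theorem \ref{intromain} promised in Section \ref{abstract}, which handles linear combinations of eigenfunctions whose eigenvalues cluster near a common value. For each fixed realization of the coefficients $(a_v)_{v \in \V}$, the function $f$ is (after pairing $v$ and $-v$ to form the real part) precisely such a linear combination, with eigenvalues $-\mu_v = -(2\pi\|v\|_B)^2$. The generalized theorem should yield a bound of the form
\[
d_{TV}(f(X), Z) \le \frac{1}{\mu}\E_X\bigl|\|\nabla f(X)\|^2 - \mu\bigr| \;+\; \frac{1}{\mu}(\text{correction involving } \mu_v - \mu).
\]
By triangle inequality I split this into a fluctuation piece and a mean-shift piece. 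After taking expectation over $a$, Jensen's inequality replaces each $L^1$ norm by the corresponding $L^2$ norm, which is the form in which the sphere moments are easy to compute.

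The central calculation is
\[
\E_{a,X}\bigl(\|\nabla f(X)\|^2 - \E_{a,X}\|\nabla f(X)\|^2\bigr)^2.
\]
Expanding $f$ as a sum of exponentials $e^{2\pi i \inprod{Bv}{x}}$, the gradient inner products on $(\T^n, B)$ yield factors $\inprod{Bv}{Bw} = (2\pi)^2 (v, w)_B$ (the condition $Bv \in \Z^n$ ensures the exponentials are well defined on the torus), and integration in $x$ over $\T^n$ collapses the oscillatory cross terms. What remains is a sum of monomials $a_v a_w a_u a_z$ weighted by $(v,w)_B (u,z)_B$. For $a$ uniform on the sphere of radius $\sqrt{2}$ in $\R^\V$, the fourth-moment identity
\[
\E\, a_v a_w a_u a_z = \frac{4}{|\V|(|\V|+2)}\bigl(\delta_{vw}\delta_{uz} + \delta_{vu}\delta_{wz} + \delta_{vz}\delta_{wu}\bigr)
\]
reduces the expectation to a multiple of $\sum_{v,w \in \V}(v,w)_B^2$ and produces the first bracketed term, including the $(2\pi)^4$ and the $8/(|\V|(|\V|+2))$ factors.

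For the eigenvalue-spread term, I would write $\E_X \|\nabla f\|^2 = \sum_v |a_v|^2 \mu_v / 2$ (up to the real-part convention), so that the mean-shift piece is controlled by $\bigl|\sum_v (|a_v|^2 - 2/|\V|)(\mu_v - \mu)\bigr|$; the sphere moments again give $\Var(|a_v|^2) = O(1/|\V|^2)$, and Cauchy-Schwarz then yields $\sqrt{\frac{1}{|\V|}\sum_v(\mu_v - \mu)^2}$. The constant $2\sqrt{2} + \sqrt{\pi}$ should then combine a bound on $\E\|f\|_{L^2}$ of the linear combination (giving the $\sqrt{2}$ from the sphere radius, doubled by the real-part pairing) with a factor $\sqrt{\pi} = \E|Z|$ coming from the Gaussian comparison in the generalized theorem. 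The main obstacle is bookkeeping in the variance computation: properly combining $v$ and $-v$ terms so that the surviving sums are genuinely $(v,w)_B^2$ rather than quantities with residual phases, and tracking the $|\V|$-dependent constants through the sphere moment identities; once the variances are in hand, the final bound follows by triangle inequality and Jensen's inequality as outlined.
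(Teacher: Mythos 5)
Your overall strategy matches the paper's: apply Theorem~\ref{absmfld} conditionally on the coefficient vector, compute the gradient and its $B$-norm by expanding in exponentials, kill the oscillatory cross terms by integrating over $\T^n$, and then use the fourth-moment identity for the uniform distribution on the sphere in $\R^{\V}$ before finishing with Jensen's/H\"older's inequality. There are, however, several imprecisions worth flagging. First, Theorem~\ref{absmfld} centers the fluctuation term at $\E\|\nabla f\|^2$, not at $\mu$: the bound already reads $\frac{2}{\mu}\E\big|\|\nabla f\|^2 - \E\|\nabla f\|^2\big|$ plus a correction $\big(1+\tfrac{\sqrt{\pi}}{2\sqrt{2}}\big)\sqrt{\sum a_i^2(\mu_i-\mu)^2}$, and the deviation $\E\|\nabla f\|^2 - \mu = \sum a_i^2(\mu_i - \mu)$ is already absorbed into that correction; your proposed split of $\E\big|\|\nabla f\|^2 - \mu\big|$ into a ``fluctuation'' and a ``mean-shift'' piece is therefore a re-derivation of something the abstract theorem hands you, and with different constants. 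Second, your ``central calculation'' is the unconditional variance $\E_{a,X}\big(\|\nabla f\|^2 - \E_{a,X}\|\nabla f\|^2\big)^2$, whereas what the application of Theorem~\ref{absmfld} requires (and what the paper computes) is the $a$-average of the \emph{conditional} variance over $X$, i.e.\ $\E_a\big[\E_X\|\nabla f\|^4 - (\E_X\|\nabla f\|^2)^2\big]$; the two differ by $\mathrm{Var}_a(\E_X\|\nabla f\|^2)$, which is exactly the quantity your separate ``mean-shift'' term controls. So your organization double-counts this piece; the resulting bound is still valid but will not reproduce the stated constants $\sqrt{8(2\pi)^4}$ and $2\sqrt{2}+\sqrt{\pi}$ without extra bookkeeping. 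Third, the identity $\inprod{Bv}{Bw}=(2\pi)^2(v,w)_B$ as written is not correct ($\inprod{Bv}{Bw}=v^{\mathsf T}B^2w$, not $v^{\mathsf T}Bw$); the correct statement is that computing $\|\nabla_B f\|_B^2$ for a single pair of exponentials produces a factor $(2\pi)^2\sum_{j,k}B_{jk}v_jw_k=(2\pi)^2(v,w)_B$, which is what the paper verifies in \eqref{grad2}. Finally, the paper imposes the hypothesis $v+w\neq 0$ for $v,w\in\V$ to ensure the $\pm v$ pairing behaves cleanly, and without a comparable assumption the cross terms you want to ``collapse'' do not all vanish; your proposal acknowledges this as an obstacle but does not resolve it.
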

Section \ref{torus} contains a discussion of the bounds above and 
concrete examples in which they are small.  The simplest possible
case is that of the usual square-lattice torus, in which the matrix $B=I$.
In that case, consider the set $\V$ to be simply the standard basis
vectors of $\R^n$.  Then the fact that $f$ as constructed above 
has a value distribution which
is close to normal is essentially 
just the classical central limit theorem.  In 
that case, the second term of the bound above is zero (take $\mu=(2\pi)^2$),
and the first term is $\frac{2\sqrt{2}}{\sqrt{n}}.$  

In the case of more general $B$ and $\V$, the first term is a weakening
of the orthogonality in the previous case; it is small as long as $\V$ is
large but the 
elements of $\V$ have directions which are fairly evenly distributed on 
the sphere.  The second term is a weakening of the fact that, in the previous
example, all of the summands in the linear combination were from the same 
eigenspace.  By taking $B$ to be a slight perturbation of $I$, the
eigenspaces may become low- or even one-dimensional.  However,
 the corresponding
eigenvalues may still be clustered very close together, in which case the 
second term of the error above is small.  
\medskip

\subsection{Notation and Conventions}\label{notation}
For random variables $X$ and $Y$, the total variation distance between
$X$ and $Y$ is defined by 
$$d_{TV}(X,Y)=\sup_A\big|\P(X\in A)-\P(Y\in A)\big|,$$
where the supremum is taken over measureable sets $A$.  This is 
equivalent to
$$d_{TV}(X,Y)=\frac{1}{2}\sup_f\big|\E f(X)-\E f(Y)\big|,$$
where the supremum is taken over functions $f$ which are continuous
and bounded by one.

Let $(M,g)$ be an $n$-dimensional Riemannian manifold.  Integration
with respect to the normalized volume measure is denoted $\dv$, thus
$\int_M1\dv=1.$
For coordinates $\left\{\frac{\partial}{\partial 
x_i}\right\}_{i=1}^n$ on $M$, define 
\begin{equation*}
(G(x))_{ij}=g_{ij}(x)=\inprod{\left.\frac{\partial}{\partial x_i}\right|_x}
{\left.\frac{\partial}{\partial x_j}\right|_x},\qquad g(x)=\det(G(x)),
\qquad g^{ij}(x)=(G^{-1}(x))_{ij}.
\end{equation*}
Define the gradient $\nabla f$ of $f:M\to\R$ and the Laplacian
$\Delta_g f$ of $f$ by
\begin{equation*}
\nabla f(x)=\sum_{j,k}\frac{\partial f}{\partial x_j}g^{jk}
\frac{\partial }{\partial x_k},\qquad\qquad \Delta_g f(x)
=\frac{1}{\sqrt{g}}\sum_{j,k}\frac{\partial}{\partial x_j}
\left(\sqrt{g}g^{jk}\frac{\partial f}{\partial x_k}\right).
\end{equation*}

Let 
$\Phi^t(x,v)$ denote the geodesic flow on the tangent bundle $TM$ and 
let $\pi:TM\to M$ be the projection map of a tangent vector onto its base 
point.

\medskip

{\bf Acknowledgements.}  I would like to thank Pierre Albin, 
Persi Diaconis, Geir 
Helleloid, Ze\'ev Rudnick, Laurent
Saloff-Coste, and Steve Zelditch for helpful discussions.  In particular,
I learned about hypergeometric series from Geir Helleloid and about 
Liouville measure from Steve Zelditch.

\section{The main result}\label{abstract}

As mentioned above, the main result of this section is a slight 
generalization of Theorem \ref{intromain} of the previous section.  The
idea is that for the method of proof used here, the functions considered
need only be {\em approximate} eigenfunctions of $\Delta_g$; in particular,
linear combinations of eigenfunctions all of whose eigenvalues cluster about
one value are close enough.

\begin{thm}\label{absmfld}
Let $(M,g)$ be a compact Riemannian manifold without boundary, and 
let $\{f_i\}_{i=1}^m$ be 
a sequence of mutually orthogonal eigenfunctions of the Laplacian 
$\Delta_g$ on $M$ with
eigenvalues $-\mu_i<0$.  Assume that $\int_Mf_i^2\dv=1$ for each $i$.  Define 
the function $f:M\to\R$ by
$$f(x)=\sum_{i=1}^ma_if_i(x),$$
where the $a_i$ are scalars such that $\sum a_i^2=1.$
Let $X$ be a random (i.e., distributed 
according to normalized volume measure) point of $M$.  
Then for $\mu>0$,
$$d_{TV}(f(X),Z)\le\frac{2}{\mu}\left[\E\left|\|\nabla f(X)\|^2-
\E\|\nabla f(X)\|^2\phantom{\sum_i}\hspace{-5.5mm}\right|+
\left(1+\frac{\sqrt{\pi}}{2\sqrt{2}}\right)\sqrt{\sum_ia_i^2(\mu_i-\mu)^2}\right],$$
where $Z$ is a standard Gaussian random variable on $\R$.
\end{thm}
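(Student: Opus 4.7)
I would use Stein's method. For a measurable set $A \subseteq \R$, the Stein equation
$$g_A'(w) - w g_A(w) = \I_A(w) - \P(Z \in A)$$
has a solution $g_A$ satisfying the classical bounds $\|g_A\|_\infty \le \sqrt{\pi/2}$ and $\|g_A'\|_\infty \le 2$. Setting $W = f(X)$, taking expectations and supremizing over $A$ then gives
$$d_{TV}(W, Z) \le \sup_A \bigl|\E[g_A'(W) - W g_A(W)]\bigr|,$$
so the task reduces to controlling $\E[g'(f) - f g(f)]$ for all such $g = g_A$.

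The key identity comes from integration by parts on $M$. Since $\Delta_g f_i = -\mu_i f_i$, I would write
$$\Delta_g f = -\mu f + r, \qquad r(x) := \sum_{i=1}^m a_i(\mu - \mu_i) f_i(x),$$
so that $f = -\tfrac{1}{\mu}\Delta_g f + \tfrac{1}{\mu} r$. Multiplying by $g(f(x))$, integrating against $\dv$, and using Stokes' theorem (no boundary) together with $\nabla(g(f)) = g'(f)\nabla f$ gives
$$\E[f g(f)] = \frac{1}{\mu}\E\bigl[g'(f)\|\nabla f\|^2\bigr] + \frac{1}{\mu}\E[r\cdot g(f)],$$
and hence
$$\E[g'(f) - f g(f)] = \E\!\left[g'(f)\left(1 - \frac{\|\nabla f\|^2}{\mu}\right)\right] - \frac{1}{\mu}\E[r\,g(f)].$$

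From here the argument is bookkeeping. Applying the Stein bounds yields
$$\bigl|\E[g'(f) - f g(f)]\bigr| \le \frac{2}{\mu}\E\bigl|\|\nabla f\|^2 - \mu\bigr| + \frac{\sqrt{\pi/2}}{\mu}\,\E|r|.$$
The orthonormality of the $f_i$ and one integration by parts give $\E\|\nabla f\|^2 = -\int_M f\,\Delta_g f\,\dv = \sum_i a_i^2\mu_i$, so Cauchy--Schwarz combined with $\sum a_i^2 = 1$ bounds
$$\bigl|\E\|\nabla f\|^2 - \mu\bigr| = \Bigl|{\textstyle\sum_i} a_i^2(\mu_i-\mu)\Bigr| \le \sqrt{{\textstyle\sum_i} a_i^2(\mu_i - \mu)^2},$$
and orthogonality gives $\E r^2 = \sum_i a_i^2(\mu_i - \mu)^2$, so $\E|r| \le \sqrt{\E r^2}$ by Jensen. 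Using the triangle inequality $\E|\|\nabla f\|^2 - \mu| \le \E|\|\nabla f\|^2 - \E\|\nabla f\|^2| + |\E\|\nabla f\|^2 - \mu|$ and combining, the coefficient of the second-moment term becomes $\tfrac{2}{\mu} + \tfrac{\sqrt{\pi/2}}{\mu} = \tfrac{2}{\mu}\bigl(1 + \tfrac{\sqrt{\pi}}{2\sqrt{2}}\bigr)$, which matches the stated bound.

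The main technical point to watch is the integration by parts, since the Stein solution $g_A$ for an indicator is only Lipschitz and has a jump in its derivative at $\partial A$. I would handle this by the standard trick of approximating $\I_A$ by smooth test functions, solving the Stein equation for those, applying the identity above, and then passing to the limit; the uniform bounds $\|g\|_\infty \le \sqrt{\pi/2}$ and $\|g'\|_\infty \le 2$ are preserved. Once that is in hand, the argument above gives the theorem.
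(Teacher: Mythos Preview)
Your argument is correct and reaches the stated bound with the stated constants. It is, however, a genuinely different route from the paper's proof.

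The paper proceeds via the exchangeable-pairs version of Stein's method: it constructs a pair $(X,X_\epsilon)$ by flowing a random unit tangent vector along a geodesic for time $\epsilon$, verifies exchangeability using the invariance of Liouville measure on $SM$ under geodesic flow, and then feeds this pair into an abstract approximation theorem (Theorem~\ref{abscont}). The Laplacian enters because $\lim_{\epsilon\to 0}\epsilon^{-2}\E[f(X_\epsilon)-f(X)\mid X]=\tfrac{1}{2n}\Delta_g f(X)$, and $\|\nabla f\|^2$ appears via the second-order term in the Taylor expansion of $f$ along the geodesic.

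Your approach bypasses all of this machinery: you write $\Delta_g f=-\mu f+r$, multiply by $g(f)$, and integrate by parts once on $M$ to obtain the identity $\E[fg(f)]=\tfrac{1}{\mu}\E[g'(f)\|\nabla f\|^2]+\tfrac{1}{\mu}\E[rg(f)]$ directly. This is the ``generator'' form of Stein's method rather than the exchangeable-pairs form. It is shorter, avoids the Liouville-measure argument and the auxiliary Theorem~\ref{abscont}, and makes transparent why eigenfunctions are the natural objects (they make the remainder $r$ vanish). The exchangeable-pairs construction, on the other hand, is a reusable template that can be exported to settings where a clean integration-by-parts identity is not available, and it connects the present result to the author's earlier work on the orthogonal group and convex bodies. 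Both routes land on exactly the same two error terms with the same constants $2/\mu$ and $1+\sqrt{\pi}/(2\sqrt{2})$.
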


The idea is to choose $\mu=\frac{1}{m}\sum \mu_i,$ and consider 
collections $\{f_i\}$ of eigenfunctions such that the eigenvalues
$\mu_i$ are very close together.  In particular, for manifolds on
which eigenspaces of $\Delta$ have high dimension, one can choose all the
$\mu_i$ to be equal, in which case the second error term drops out.

\bigskip

The proof is an application of the following abstract normal approximation
theorem.  A proof of the theorem was given in \cite{morth} in the case
that $E'=0$; the proof of the theorem below is a trivial modification of the
earlier case.

\begin{thm}\label{abscont}Suppose that $(W,W_\epsilon)$ is a family of 
exchangeable pairs defined on a common probability space with 
$\E W=0$ and $\E W^2=\sigma^2$.  Suppose there are random variables $E=
E(W)$ and $E'=E'(W)$, 
deterministic functions $h$ and $k$ with 
$$\lim_{\epsilon\to0}h(\epsilon)=\lim_{\epsilon\to0}k(\epsilon)=0$$
and functions $\alpha$ and $\beta$ with 
$$\E|\alpha(\sigma^{-1}W)|<\infty,\qquad\E|\beta(\sigma^{-1}W)|<\infty,$$
such that 
\begin{enumerate}
\item $$\frac{1}{\epsilon^2}\E\left[W_\epsilon-W\big|
W\right]=-\lambda W+E'+h(\epsilon)\alpha(W),$$\label{lindiff3}
\item $$\frac{1}{\epsilon^2}\E\left[(W_\epsilon-W)^2\big|
W\right]=2\lambda\sigma^2 +E\sigma^2+k(\epsilon)\beta(W),$$\label{quaddiff3}
\item $$\frac{1}{\epsilon^2}\E\left|W_\epsilon-W\right|^3
=o(1),$$\label{cubediff3}
\end{enumerate}
where $o(1)$ refers to the limit as $\epsilon\to0$ with implied
constants depending on the distribution of $W$.
Then
$$d_{TV}(W,Z)\le \frac{1}{\lambda}\left[\sqrt{\frac{\pi}{2}}
\E\left|E'\right|+\E\left|E\right|\right],$$
where $Z\sim\n(0,\sigma^2).$

\end{thm}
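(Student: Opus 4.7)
The plan is to apply the continuous (infinitesimal) version of Stein's method of exchangeable pairs for Gaussian approximation, extending the $E' = 0$ argument of \cite{morth} by tracking the additional drift term through the calculation.

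For a test function $g$ with $|g| \le 1$, I would first solve the Stein equation for the target $Z \sim \n(0, \sigma^2)$,
$$\sigma^2 f'(w) - w f(w) = g(w) - \E g(Z),$$
obtaining a solution $f = f_g$; classical sup-norm bounds on this ODE (see, e.g., Chen--Goldstein--Shao) produce the constants $\sqrt{\pi/2}$ and $1$ that appear in the stated inequality.

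The key identity comes from exchangeability. Since $(W, W_\epsilon) \stackrel{d}{=} (W_\epsilon, W)$, the antisymmetric function $(b-a)(f(a) + f(b))$ has vanishing expectation at $(W, W_\epsilon)$, and Taylor-expanding $f(W_\epsilon)$ about $W$ gives
$$0 \;=\; 2\E\bigl[(W_\epsilon - W) f(W)\bigr] + \E\bigl[(W_\epsilon - W)^2 f'(W)\bigr] + R_\epsilon,$$
where $R_\epsilon$ is a cubic Taylor remainder. Conditioning on $W$, dividing by $\epsilon^2$, and substituting hypotheses (1) and (2), then passing to $\epsilon \to 0$ (the $h(\epsilon)\alpha$ and $k(\epsilon)\beta$ contributions vanish by dominated convergence using the integrability of $\alpha,\beta$ and the boundedness of $f,f'$, while $R_\epsilon / \epsilon^2 \to 0$ by hypothesis (3)), yields the limiting identity
$$0 \;=\; -2\lambda\, \E[W f(W)] + 2\,\E[E' f(W)] + 2\lambda \sigma^2\, \E[f'(W)] + \sigma^2\, \E[E f'(W)].$$
Rearranging and invoking the Stein equation to identify the left-hand side gives
$$\E g(W) - \E g(Z) \;=\; \E[\sigma^2 f'(W) - W f(W)] \;=\; -\frac{1}{\lambda}\,\E[E' f(W)] - \frac{\sigma^2}{2\lambda}\,\E[E f'(W)].$$
Taking absolute values, applying the sup-norm bounds on $f$ and $f'$, and taking the supremum over $g$ of indicator form produces the stated TV inequality.

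The main technical subtlety is that for indicator test functions $g$ the Stein solution $f$ is only Lipschitz, so the naive bound $|R_\epsilon| \le \|f''\|_\infty \E|W_\epsilon - W|^3$ fails. This is handled, as in \cite{morth}, by first proving the identity for smooth $g$ (say by mollifying the indicator), applying the argument to the smooth approximation, and then passing to the indicator limit with an error controlled by $\|f'\|_\infty$; alternatively one writes the Taylor remainder in integral form using only $f'$ and bounds it via the quadratic hypothesis (2). Apart from this, the only real addition to the $E'=0$ argument is to carry the new $E' f(W)$ term, control it by $\|f\|_\infty$, and produce the factor $\sqrt{\pi/2}$ from the Stein solution bound.
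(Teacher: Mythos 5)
Your proposal correctly reconstructs the infinitesimal exchangeable-pairs/Stein's-method argument that the paper defers to \cite{morth}: the antisymmetric identity $\E[(W_\epsilon-W)(f(W)+f(W_\epsilon))]=0$, Taylor expansion, substitution of hypotheses (1)--(3), the limiting relation, and the resulting bound via $\|f\|_\infty$ and $\|f'\|_\infty$ are exactly the intended ``trivial modification'' for nonzero $E'$, and your observation about the Lipschitz-only regularity of the Stein solution for indicator test functions is the genuine technical point that the cited reference handles by smoothing. (One small remark: carrying out the scaling honestly gives $\|f\|_\infty\le\sigma^{-1}\sqrt{\pi/2}$ for the centred Gaussian with variance $\sigma^2$, so the first term properly carries a $\sigma^{-1}$; this is invisible in the paper's statement and in all applications here because $\sigma=1$, but your claim that the stated constants ``appear'' should really be ``appear when $\sigma=1$.'')
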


\bigskip

\begin{proof}[Proof of Theorem \ref{absmfld}]
To apply Theorem \ref{abscont} above, start by constructing a family of 
exchangeable pairs of points in $M$ 
parametrized
by $\epsilon$.  Let $X$ be a random point of $M$ and
let $\epsilon>0$ be smaller than the injectivity radius of the exponential
map at $X$.  Since $M$ is compact, 
there is a range of $\epsilon$ small
enough to work at every point.  Now, choose a unit vector $V\in T_XM$ at 
random, independent of $X$, and define $$X_\epsilon=exp_X(\epsilon V).$$
To see that this defines an exchangeable pair of random points on $M$, 
we show that 
$$\int_{M\times M}g(x,x_{\epsilon})d\mu=\int_{M\times M}
g(x_{\epsilon},x)d\mu$$
for all integrable $g:M\times M\to\R$, where $\mu$ is the measure 
defined by the construction of the pair $(X,X_\epsilon)$.

Let $L$ denote the normalized Liouville measure on $SM$, which is locally the 
product of the normalized volume measure on $M$ and normalized Lebesgue 
measure on the unit spheres of $T_xM$.  The measure $L$ has the 
property that it is invariant under the geodesic flow $\Phi^t(x,v)$.  See
\cite{chavel}, section 5.1 for a construction of $L$ and 
proofs of its key properties.
By construction of the measure $\mu$, 
\begin{equation*}\begin{split}
\int_{M\times M}g(x,x_{\epsilon})d\mu&=\int_{SM}g(x,\pi\circ\Phi^\epsilon(x,v))
dL(x,v)\\&=\int_{SM}g(\pi\circ\Phi^{-\epsilon}(y,\eta),y)dL(y,\eta)
\end{split}\end{equation*}
by the substitution $(y,\eta)=\Phi^\epsilon(x,v)$, since 
Liouville measure is invariant under the geodesic flow.  Now,
$$\Phi^{-\epsilon}(y,\eta)=-\Phi^\epsilon(y,-\eta),$$
as both correspond to going backwards on the same geodesic.
It follows that 
\begin{equation*}\begin{split}
\int_{SM}g(x,x_{\epsilon})dL(x,v)&=\int_{SM}g(\pi(-\Phi^{\epsilon}
(y,-\eta)),y)dL(y,\eta)\\&=\int_{SM}g(\pi(-\Phi^{\epsilon}(y,\eta)),y)
dL(y,\eta)\\&=\int_{SM}g(\pi(\Phi^{\epsilon}(y,\eta)),y)
dL(y,\eta)\\
&=\int_{M\times M}g(y_\epsilon,y)d\mu,
\end{split}\end{equation*}
where the second line follows from the substitution $-\eta\to\eta$, 
under which Lebesgue measure on the tangent space is invariant, and the 
third line follows because $\pi(-v)=\pi(v)$ (both have the same base point).
Thus the pair $(X,X_\epsilon)$
is exchangeable as required.

Now, let $f(x)=\sum a_if_i(x)$ as in the statement of the theorem.
For notational convenience, let $W=f(X)$ and 
$W_\epsilon=f(X_\epsilon)$.  Since $(X,X_\epsilon)$ 
is exchangeable, $(W,W_\epsilon)$ is an
exchangeable pair as well.  

In order to verify the conditions of Theorem \ref{abscont}, first 
let $\gamma:[0,
\epsilon] \to M$ be a constant-speed geodesic such that $\gamma(0)=X$, 
$\gamma(\epsilon)=
X_\epsilon$, and $\gamma'(0)=V$.  
Then applying Taylor's theorem on $\R$ to
the function $f\circ\gamma$ yields
\begin{equation}\begin{split}\label{taylor}
f(X_\epsilon)-f(X)&=\epsilon\cdot\left.\frac{d(f\circ\gamma)}{dt}\right|_{t=0}+
\frac{\epsilon^2}{2}\cdot\left.\frac{d^2(f\circ\gamma)}{dt^2}\right|_{t=0}
+O(\epsilon^3)\\
&=\epsilon\cdot d_Xf(V)+\frac{\epsilon^2}{2}
\cdot\left.\frac{d^2(f\circ\gamma)}{dt^2}
\right|_{t=0}+O(\epsilon^3),
\end{split}\end{equation}  
where the coefficient implicit in the $O(\epsilon^3)$ depends on $f$ and 
$\gamma$ and $d_xf$ denotes the differential of $f$ at $x$.  Recall that
$d_xf(v)=\inprod{\nabla f(x)}{v}$ for $v\in T_xM$ and the gradient $
\nabla f(x)$ defined as above.    
Now, for $X$ fixed, 
$V$ is distributed according to normalized Lebesgue measure on
$S_XM$ and $d_Xf$ is a linear functional on $T_XM$.  It follows that
$$\E\left[d_Xf(V)\big|X\right]=\E\left[d_Xf(-V)\big|X\right]=
-\E\left[d_Xf(V)\big|X\right],$$
so $$\E\left[d_Xf(V)\big|X\right]=0.$$
This implies that
$$\lim_{\epsilon\to0}\frac{1}{\epsilon^2}\E\big[f(X_\epsilon)-f(X)\big|
X\big]$$
exists and is finite.  
Indeed, it is well-known that 
 $$\lim_{\epsilon\to0}\frac{1}{\epsilon^2}\E\big[f(X_\epsilon)
-f(X)\big|X\big]=\frac{1}{2n}\Delta_gf(X)$$
for $n=dim(M),$ thus $$\lim_{\epsilon\to0}\frac{1}{\epsilon^2}
\E\left[W_\epsilon-W\big|W\right]=
-\frac{1}{2n}\E\left[\sum_{i=1}^ma_i\mu_if_i(X)\big|W\right]=
-\frac{\mu}{2n}W+\frac{1}{2n}\E\left[\left.\sum_{i=1}^ma_i(\mu-\mu_i)f_i(X)
\right|W\right].$$
It follows that
$\lambda = \frac{\mu}{2n}$ and
$E'=\frac{1}{2n}\E\left[\left.\sum_ia_i(\mu-\mu_i)f_i(X)\right|W\right].$
The higher order terms in $\epsilon$ of $\E\left[W_\epsilon-W\big|W\right]$
satisfy the integrability requirement of Theorem \ref{abscont} since
$f$ is smooth and $M$ is compact.
Finally,
\begin{equation*}\begin{split}
\frac{1}{\lambda}\E|E'|&\le\frac{1}{\mu}\E\left|\sum_ia_i(\mu_i-\mu)f_i(X)
\right|\\&\le\frac{1}{\mu}
\sqrt{\E\left[\sum_ia_i(\mu_i-\mu)f_i(X)\right]^2}\\&=\frac{1}{\mu}
\sqrt{\sum_ia_i^2(\mu_i-\mu)^2},
\end{split}\end{equation*}
since the $f_i$ are mutually orthogonal.

\medskip

\medskip

Consider next
$$\lim_{\epsilon\to0}\frac{1}{\epsilon^2}\E\big[(W_\epsilon-W)^2\big|X\big].$$
By the expansion (\ref{taylor}),
\begin{eqnarray*}
\E\left[(W_\epsilon-W)^2\big|X\right]&=&\E\left[(f(X_\epsilon)-f(X))^2\big|X
\right]\\&=&\epsilon^2\E\left[(d_Xf(V))^2\big|X\right]+O(\epsilon^3).
\end{eqnarray*}
Choose coordinates $\left\{\frac{\partial}{\partial x_i}\right\}_{i=1}^n$
in a neighborhood of $X$ which are orthonormal at $X$.  Then
$$\nabla f(X)=\sum_i\frac{\partial f}{\partial x_i}\frac{\partial}
{\partial x_i},$$
thus
\begin{equation*}\begin{split}
\left[d_xf(v)\right]^2&=\left[\inprod{\nabla f}{v}\right]^2\\
&=\sum_{i=1}^n\left(\frac{\partial f}{\partial x_i}\right)^2v_i^2+
\sum_{i\neq j}\frac{\partial f}{\partial x_i}\frac{\partial f}{\partial x_j}
v_iv_j.
\end{split}\end{equation*}
Since $V$ is uniformly distributed on a Euclidean sphere, 
$\E[V_iV_j]=\frac{1}{n}\delta_{ij}$.   Making use of this fact yields
$$\lim_{\epsilon\to0}\frac{1}{\epsilon^2}\E\left[(d_Xf(V))^2\big|X\right]=
\frac{1}{n}\|\nabla f\|^2=\frac{\mu}{n}+\frac{1}{n}\left[\|\nabla f\|^2
-\mu\right],$$
thus condition (2) is satisfied with 
$$E=\frac{1}{n}\E\left[\left.\|\nabla f(X)\|^2-\mu\right|W\right].$$

By Stokes' theorem,
\begin{equation*}\begin{split}
\E\|\nabla f(X)\|^2&=-\E\big[f(X)\Delta_g f(X)\big]\\
&=\E\left[f(X)\sum_ia_i\mu_if_i(X)\right]\\&=\mu\E f^2(X)+\E\left[f(X)\sum_i
a_i(\mu_i-\mu)f_i(X)\right]\\&=\mu+\E\left[f(X)\sum_i
a_i(\mu_i-\mu)f_i(X)\right]
\end{split}\end{equation*}
Thus 
\begin{equation*}\begin{split}
E&=\frac{1}{n}\E\left[\left.\|\nabla f\|^2-\mu-\E\left[f(X)\sum_i
a_i(\mu_i-\mu)f_i(X)\right]+\E\left[f(X)\sum_ia_i(\mu_i-\mu)f_i(X)\right]
\right|W\right]\\
&=\frac{1}{n}\left[\E\left[\left.\|\nabla f\|^2-\E\|\nabla f\|^2
\right|W\right]+\E\left[f(X)\sum_ia_i(\mu_i
-\mu)f_i(X)\right]\right],
\end{split}\end{equation*}
and so
\begin{equation*}\begin{split}
\frac{1}{\lambda}\E|E|&\le\frac{2}{\mu}\left[\E\left|\|\nabla f(X)\|^2-
\E\|\nabla f(X)\|^2\phantom{\sum_i}\hspace{-5.5mm}\right|
+\E\left|f(X)\sum_ia_i(\mu_i-\mu)f_i(X)\right|\right]\\
&\le\frac{2}{\mu}\left[\E\left|\|\nabla f(X)\|^2-
\E\|\nabla f(X)\|^2\phantom{\sum_i}\hspace{-5.5mm}\right|
+\sqrt{\E f^2(X)}
\sqrt{\E\left(\sum_ia_i(\mu_i-\mu)f_i(X)\right)^2}\right]\\&\le
\frac{2}{\mu}\left[\E\left|\|\nabla f(X)\|^2-
\E\|\nabla f(X)\|^2\phantom{\sum_i}\hspace{-5.5mm}\right|
+\sqrt{\sum_ia_i^2(\mu_i-\mu)^2}\right]
\end{split}\end{equation*}

Finally, (\ref{taylor}) 
gives immediately that 
$$\E\left[|W_\epsilon-W|^3\big|W\right]
=O(\epsilon^3).$$

\end{proof}

\section{The sphere}\label{sphere}
Eigenvalues and eigenfunctions of the Laplacian on the sphere $\s^{n-1}$ are 
well-studied objects.  In particular, it is discussed in section 9.5 of 
\cite{vil} 
that the eigenfunctions of 
the spherical Laplacian are exactly the restrictions of homogeneous harmonic
polynomials on $\R^n$ to the sphere, and that such polynomials of 
degree $l$ have eigenvalue $-l(l+n-2).$   It follows, then, that the simplest
class of functions on the sphere to which one could try to apply Theorem
\ref{absmfld} are the linear functions.  As was mentioned in the 
introduction, understanding the value distribution of linear functions
on the sphere is an old problem.  A careful history is given in 
the paper of Diaconis and Freedman \cite{diafree}, in which the following
theorem (and more general versions) are proved.
\begin{thm}[Diaconis-Freedman]
Let $X$ be a random point on the sphere $\sqrt{n}\s^{n-1}\subseteq\R^n$.
Then $$d_{TV}(X_1,Z)\le\frac{4}{n-1},$$
where $Z\sim\n(0,1).$
\end{thm}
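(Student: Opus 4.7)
The plan is to derive the Diaconis--Freedman bound as a quick corollary of Theorem \ref{intromain}, applied to the simplest possible eigenfunction on the sphere: the coordinate function $f(x) = x_1$ on $M = \sqrt{n}\,\s^{n-1}$ equipped with the induced metric. The choice of radius $\sqrt{n}$ is what makes the $L^2$-normalization required by the theorem automatic, since by symmetry $\E X_1^2 = \frac{1}{n}\E\|X\|^2 = 1$. Restricted to $M$, $f$ is a rescaled degree-one spherical harmonic, so it is an eigenfunction; scaling the sphere to radius $\sqrt{n}$ divides the unit-sphere eigenvalue $-(n-1)$ by $n$, giving $\lambda = (n-1)/n$.

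The next step is to compute the random variable $\|\nabla f(X)\|^2$ explicitly. Since $f$ is the restriction of a linear function on $\R^n$, its Riemannian gradient at $x \in M$ is the orthogonal projection of $e_1$ onto $T_xM = x^{\perp}$, namely $e_1 - \frac{x_1}{n}x$. A direct computation gives $\|\nabla f(x)\|^2 = 1 - x_1^2/n$. Consequently
\[
\E\Big|\|\nabla f(X)\|^2 - \E\|\nabla f(X)\|^2\Big| = \frac{1}{n}\,\E|X_1^2 - 1|,
\]
and Theorem \ref{intromain} reduces the problem to
\[
d_{TV}(X_1, Z) \le \frac{2}{n-1}\,\E|X_1^2 - 1|.
\]

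Finally, I would bound $\E|X_1^2 - 1|$ by $\sqrt{\var(X_1^2)}$ via Cauchy--Schwarz and compute the variance from the classical moment formula $\E X_1^4 = 3n/(n+2)$ for a uniform random point on $\sqrt{n}\,\s^{n-1}$, which can be obtained either from the Gaussian representation $X = \sqrt{n}\,G/\|G\|$ or from the explicit Beta distribution of $X_1^2/n$. This gives $\var(X_1^2) = 2(n-1)/(n+2)$, and substituting yields
\[
d_{TV}(X_1, Z) \le \frac{2\sqrt{2}}{\sqrt{(n-1)(n+2)}},
\]
which is strictly better than $4/(n-1)$ for every $n \ge 2$.

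I do not anticipate any real difficulty in carrying this out: every step is a computation that is either immediate or a classical moment identity. The one point that requires some care is keeping the factor-of-$n$ scalings straight between the unit sphere and the sphere of radius $\sqrt{n}$, since the eigenvalue, the $L^2$-normalization, and the gradient all scale; once the correct scaling is in place the bound falls out in a few lines.
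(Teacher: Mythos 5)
Your computation is correct, and it is exactly the route the paper gestures at but does not carry out: immediately after stating the Diaconis--Freedman theorem the paper remarks only that ``it is straightforward to reproduce this result as a consequence of Theorem \ref{absmfld}'' without giving the details. I checked each step: on $\sqrt{n}\,\s^{n-1}$ the eigenvalue is indeed $\lambda=(n-1)/n$ (scaling the metric by a factor of $n$ scales the Laplacian by $1/n$), the projected gradient gives $\|\nabla f(x)\|^2 = 1 - x_1^2/n$, and Lemma~\ref{sph-ints} yields $\E X_1^4 = 3n/(n+2)$, hence $\var(X_1^2) = 2(n-1)/(n+2)$; combining with Theorem~\ref{intromain} via Cauchy--Schwarz gives $d_{TV}(X_1,Z)\le 2\sqrt{2}/\sqrt{(n-1)(n+2)}$, which is indeed $\le 4/(n-1)$ for all $n\ge 2$. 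Note, though, that this is not the same argument as Diaconis and Freedman's original proof in \cite{diafree}, which bounds the total variation distance by a direct comparison of the explicit densities (the Beta-type density of $X_1$ versus the Gaussian density), with no Stein-type machinery; what the eigenfunction approach buys is that the computation reduces to two moments of $X_1$, and it even gives a slightly sharper constant and a better power, $O(n^{-1})$ versus the $O(n^{-1})$ of the original but with a smaller leading coefficient for large $n$.
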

It is straightforward to reproduce this result as a consequence of 
Theorem \ref{absmfld}.  In Section \ref{harmonic} below, 
Theorem \ref{absmfld} is applied to the
next obvious example, the second order spherical harmonics.  Section
\ref{degl} treats more specialized examples of spherical harmonics of 
arbitrary (odd) degree.

The following lemma, taken from \cite{folland},
for integrating polynomials over the sphere will be 
useful in applications.

\begin{lemma}\label{sph-ints}
Let $P(x)=|x_1|^{\alpha_1}|x_2|^{\alpha_2}\cdots|x_n|^{\alpha_n}$.  
Then if $X$ is uniformly distributed on $S^{n-1}$, 
$$\E\big[P(X)\big]=\frac{\Gamma(\beta_1)\cdots\Gamma(\beta_n)\Gamma(\frac{n}{2})}{\Gamma(\beta_1+\cdots+\beta_n)\pi^{n/2}},$$
where $\beta_i=\frac{1}{2}(\alpha_i+1)$ for $1\le i\le n$ and 
$$\Gamma(t)=\int_0^\infty s^{t-1}e^{-s}ds=2\int_0^\infty r^{2t-1}e^{-r^2}dr.$$
\end{lemma}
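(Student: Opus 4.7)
The plan is to evaluate the Gaussian-weighted integral
$$I=\int_{\R^n}|x_1|^{\alpha_1}\cdots|x_n|^{\alpha_n}e^{-\|x\|^2}\,dx$$
in two ways and compare. First, by Fubini's theorem, $I$ factors as a product of one-dimensional integrals $\prod_i\int_\R|t|^{\alpha_i}e^{-t^2}\,dt=\prod_i 2\int_0^\infty r^{\alpha_i}e^{-r^2}\,dr$, and each of these equals $\Gamma(\beta_i)$ by the identity $\Gamma(t)=2\int_0^\infty r^{2t-1}e^{-r^2}\,dr$ with $2\beta_i-1=\alpha_i$. So $I=\prod_{i=1}^n\Gamma(\beta_i)$.

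Second, I would switch to polar coordinates $x=r\theta$ with $r>0$ and $\theta\in\s^{n-1}$, using unnormalized surface measure $d\sigma$. Since the integrand is homogeneous of degree $\alpha_1+\cdots+\alpha_n=2\sum\beta_i-n$, the integral separates as
$$I=\left(\int_0^\infty r^{2\sum\beta_i-1}e^{-r^2}\,dr\right)\int_{\s^{n-1}}|\theta_1|^{\alpha_1}\cdots|\theta_n|^{\alpha_n}\,d\sigma(\theta)=\frac{1}{2}\Gamma\!\left({\ts\sum_i\beta_i}\right)\int_{\s^{n-1}}P(\theta)\,d\sigma(\theta),$$
again by the same Gamma identity.

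Comparing the two expressions yields $\int_{\s^{n-1}}P\,d\sigma=2\prod_i\Gamma(\beta_i)/\Gamma(\sum_i\beta_i)$. Normalizing by the total surface area $\sigma(\s^{n-1})=2\pi^{n/2}/\Gamma(n/2)$, one arrives at the claimed formula for $\E[P(X)]$. There is no real obstacle here: the argument is a standard polar-coordinates calculation, and the only step to be careful about is bookkeeping of the exponents, in particular verifying that $\sum_i\alpha_i+n=2\sum_i\beta_i$ so that the radial integral indeed produces $\Gamma(\sum_i\beta_i)$.
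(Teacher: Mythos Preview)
Your proof is correct and is exactly the standard argument the paper has in mind: the lemma is quoted from \cite{folland}, where the proof proceeds precisely by evaluating the Gaussian integral $\int_{\R^n}P(x)e^{-\|x\|^2}\,dx$ once via Fubini and once via polar coordinates, and then normalizing by $\sigma(\s^{n-1})=2\pi^{n/2}/\Gamma(n/2)$. There is nothing to add or correct.
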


\subsection{Second order spherical harmonics}\label{harmonic}

\begin{thm}\label{sph}
Let $g(x)=\sum_{i,j}a_{ij}x_ix_j,$ where $A=(a_{ij})_{i,j=1}^n$ is a
symmetric matrix with $\tr(A)=0$.  Let $f=Cg$, where $C$ is chosen such that
$\|f\|_2=1$ when $f$ is considered as a function on $S^{n-1}$.  Let $W=f(X),$ 
where $X$ is a random point on $S^{n-1}$.  If $d$ is the 
vector in $\R^n$ whose entries are the eigenvalues of $A$, then 
\begin{equation*}
d_{TV}(W,Z)\le \sqrt{6}\left(\frac{\|d\|_4}{\|d\|_2}\right)^2,\end{equation*}
where $\|d\|_p=\left(\sum_i|d_i|^p\right)^{1/p}.$
 
\end{thm}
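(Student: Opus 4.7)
The plan is to apply Theorem \ref{intromain} with $m = 1$, since $f$ is a single eigenfunction of the spherical Laplacian of degree $2$ and hence has eigenvalue $-\mu = -2n$. Because $A$ is symmetric I first diagonalize: there is an orthogonal $O$ with $O^T A O = \mathrm{diag}(d_1, \ldots, d_n)$, and rotations preserve the uniform measure on $S^{n-1}$ together with gradients and $L^2$-norms, so I may assume $g(x) = \sum_i d_i x_i^2$ with $\sum_i d_i = 0$. The ambient gradient is $\nabla_{\R^n} g(x) = (2 d_i x_i)_{i=1}^n$, and taking its tangential projection yields the intrinsic spherical gradient,
\begin{equation*}
\|\nabla g(x)\|^2 = \|\nabla_{\R^n} g(x)\|^2 - \langle \nabla_{\R^n} g(x), x\rangle^2 = 4\sum_i d_i^2 x_i^2 - 4 g(x)^2, \qquad x \in S^{n-1}.
\end{equation*}

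Next I pin down the normalization. By Lemma \ref{sph-ints}, $\E X_i^4 = 3/[n(n+2)]$ and $\E X_i^2 X_j^2 = 1/[n(n+2)]$ for $i \neq j$; combined with the trace-zero hypothesis this gives $\E g(X)^2 = 2\|d\|_2^2/[n(n+2)]$ and hence $C^2 = n(n+2)/(2\|d\|_2^2)$. Theorem \ref{intromain} then bounds $d_{TV}(W,Z)$ by $\frac{1}{n}\E\big|\|\nabla f(X)\|^2 - 2n\big|$, and Cauchy--Schwarz upgrades this to $\frac{1}{n}\sqrt{\mathrm{Var}(\|\nabla f(X)\|^2)}$. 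The problem therefore reduces to estimating the variance of the polynomial $\|\nabla f(X)\|^2 = 4 C^2 H(X)$, where $H(x) := \sum_i d_i^2 x_i^2 - g(x)^2$.

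The main obstacle is the moment computation that remains. Since $H$ is a polynomial of degree $4$, its variance is a linear combination of monomial integrals of degree up to $8$, each given explicitly by Lemma \ref{sph-ints}. I would expand $H(X)^2$, sort the monomials by the multiplicity pattern of their indices, and evaluate each class. The trace-zero constraint $\sum_i d_i = 0$ collapses the mixed sums: identities such as $\sum_{i \neq j} d_i d_j = -\|d\|_2^2$ and $\sum_{i,j,k \text{ distinct}} d_i^2 d_j d_k = -\|d\|_2^4 + 2\|d\|_4^4$ let every coefficient be written in terms of $\|d\|_2^4$ and $\|d\|_4^4$ alone. I expect massive cancellation of the $\|d\|_2^4$ contributions between the two pieces of $H$, reflecting the fact that $\E H(X) = \|d\|_2^2/(n+2)$ is already of lower order than either piece individually, leaving $\mathrm{Var}(H(X))$ proportional to $\|d\|_4^4$ with the correct denominator in $n$. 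Combining with the prefactor $(4C^2)^2$ and taking the square root converts the bound into a constant multiple of $(\|d\|_4/\|d\|_2)^2$; careful bookkeeping of the combinatorial coefficients should deliver the stated constant $\sqrt{6}$. The conceptual work is entirely in the reduction of the preceding paragraph; the residual difficulty is the length rather than the depth of this final calculation.
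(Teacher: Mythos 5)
Your reduction to the diagonal case, the computation $\E g(X)^2 = 2\|d\|_2^2/[n(n+2)]$ fixing $C^2 = n(n+2)/(2\|d\|_2^2)$, and the passage from Theorem \ref{intromain} via Cauchy--Schwarz to $d_{TV}(W,Z)\le\frac{1}{n}\sqrt{\var(\|\nabla_{S^{n-1}}f(X)\|^2)}$ all match the paper's setup. The gap is in what remains. You propose to expand $H(X)^2=\bigl(\sum_i d_i^2 X_i^2 - g(X)^2\bigr)^2$ and integrate it term by term, but this is a degree-$8$ polynomial in $X$ with many index-multiplicity classes, and the ``careful bookkeeping'' you defer is not a formality but the entire remaining difficulty of the theorem. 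Your prediction that the $\|d\|_2^4$ contributions cancel and the surviving $\|d\|_4^4$ piece delivers the stated constant $\sqrt{6}$ is, as written, a conjecture rather than a verification, and the constant is exactly the kind of thing that slips in so long an expansion.

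The idea the paper uses to make this tractable, and which you have not noticed, is the following: since $\nabla_{S^{n-1}}g(x)$ is the tangential projection of the ambient gradient, pointwise $\|\nabla_{S^{n-1}}g(x)\|^2\le\|\nabla_{\R^n}g(x)\|^2$, and therefore $\E\|\nabla_{S^{n-1}}f\|^4\le C^4\,\E\|\nabla_{\R^n}g\|^4$. The right-hand side is only degree $4$: $\E\|\nabla_{\R^n}g\|^4 = 16\,\E\bigl(\sum_i d_i^2 X_i^2\bigr)^2 = \tfrac{16}{n(n+2)}\bigl(\|d\|_2^4 + 2\|d\|_4^4\bigr)$ by Lemma \ref{sph-ints}. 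Since $\E\|\nabla_{S^{n-1}}f\|^2=2n$ is already exact by Stokes' theorem, nothing analogous is needed on the second-moment side, and the variance bound collapses to a two-line calculation, finished by the elementary inequality $\|d\|_4^4/\|d\|_2^4\ge 1/n$. Your exact degree-$8$ route would, if actually carried through, yield a bound at least as good (the projection step only introduces slack), but it is far heavier, and until you execute it you have not proven the statement; the projection inequality is the missing observation that turns the proof into a short computation.
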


\medskip

\begin{proof}
To apply Theorem \ref{absmfld}, 
first note that $f$ is indeed 
an eigenfunction of the Laplacian on $S^{n-1}$;since
$A$ is traceless, $g$ is harmonic on $\R^n$ and thus is an eigenfunction
with eigenvalue $\lambda=-2n$.

Next, observe that $g(x)=\inprod{x}{Ax},$ and $A$ is a symmetric matrix, so
$A=U^*DU$ for a diagonal matrix $D$ and an orthogonal matrix $U$.  
Thus $$g(x)=\inprod{x}{U^*DUx}=\inprod{Ux}{DUx}.$$
Since the uniform distribution on the sphere is invariant under the 
action of the orthogonal group, this observation means that it suffices
to prove the theorem in the case that $A=diag(a_1,\ldots,a_n).$

Since $f$ is an eigenfunction, the second error term of Theorem \ref{absmfld}
vanishes.  By the theorem and an application of H\"older's inequality,
$$d_{TV}(W,Z)\le\frac{1}{|\lambda|}\sqrt{Var\big(\|\nabla f\|^2\big)},$$
 and the main task of the proof is to estimate the right-hand side.
A consequence of Stokes' theorem is that 
$$\int_{S^{n-1}}\|\nabla g\|^2d\sigma=|\lambda|\int_{S^{n-1}}g^2d\sigma,$$
so one can calculate $\E\|\nabla g\|^2$ in order to determine $C$.
The spherical gradient $\nabla g$ is just the projection onto the hyperplane
orthogonal to the radial direction of the usual gradient.  Letting
$\sideset{}{'}\sum$ stand for summing over distinct indexes, 
\begin{eqnarray*}
\E\|\nabla_{S^{n-1}}g(x)\|^2&=&\E\|\nabla_{\R^n}g(x)\|^2-\E\left(x\cdot\nabla_
{\R^n}g(x)\right)^2\\
&=&\E\left[\sum_{i=1}^n4a_i^2x_i^2\right]-4\E\left[\sum_{i=1}^na_i^2x_i^4+
\sideset{}{'}\sum_{i,j}a_ia_jx_i^2x_j^2\right]\\
&=&\frac{4}{n}\sum_{i=1}^na_i^2-\frac{4}{n(n+2)}\left[3\sum_ia_i^2+
\sideset{}{'}\sum_{i,j}a_ia_j\right]\\
&=&\frac{4}{n}\sum_{i=1}^na_i^2-\frac{4}{n(n+2)}\left[2\sum_ia_i^2+
\left(\sum_{i=1}^na_i\right)^2\right]\\
&=&\frac{4}{n+2}\sum_{i=1}^na_i^2,
\end{eqnarray*}
where Lemma \ref{sph-ints} 
has been used to get the third line, and the fact that $\tr(A)=0$ is
used to get the last line.  From this computation it follows that the 
constant $C$ in the statement of the theorem should be taken to be
$\sqrt{\frac{n(n+2)}{2\|a\|_2^2}},$ where $a=(a_i)_{i=1}^n.$
   
Now,
\begin{equation*}
\E\|\nabla_{S^{n-1}}g\|^4\le\E\|\nabla_{\R^n}g\|^4,
\end{equation*}
since $\nabla_{S^{n-1}}g$ is a projection of $\nabla_{\R^n}g$.
\begin{eqnarray*}
\E\|\nabla_{\R^n}g\|^4&=&16\,\E\left[\sum_{i=1}^na_i^4x_i^4+\sideset{}{'}
\sum_{i,j}a_i^2a_j^2x_i^2x_j^2\right]\\
&=&\frac{16}{n(n+2)}\left[\left(\sum_ia_i^2\right)^2+2\sum_ia_i^4\right],
\end{eqnarray*}
and so
\begin{eqnarray*}
\E\|\nabla_{S^{n-1}}f\|^4&\le&\frac{n^2(n+2)^2}{4\|a\|_2^4}\E\|\nabla_
{\R^n}g\|^4\\&=&\frac{4n(n+2)}{\|a\|_2^4}\left[\left(\sum_ia_i^2\right)^2+
2\sum_ia_i^4\right]\\
&=&4n(n+2)\left[1+\frac{2\|a\|_4^4}{\|a\|_2^4}\right].
\end{eqnarray*}
This gives that
$$\var\left(\|\nabla f\|^2\right)\le 8n^2\left(\frac{\|a\|_4^4}{\|a\|_2^4}
\right)+8n\left[1+\frac{2\|a\|_4^4}{\|a\|_2^4}\right],$$
thus
$$d_{TV}(W,Z)\le
\sqrt{\left(2+\frac{4}{n}\right)\left(\frac{\|a\|_4}{\|a\|_2}\right)^4
+\frac{2}{n}}\le\sqrt{4+\frac{4}{n}}\left(\frac{\|a\|_4}{\|a\|_2}\right)^2,$$
since $\|a\|_4\ge n^{-1/4}\|a\|_2$.
\end{proof}

\medskip

{\bf Remark:} 
For $n$ even, consider the quadratic function
$$f(x)=\sqrt{\frac{n+2}{2}}\left[\sum_{i=1}^{\frac{n}{2}}x_i^2-\sum_{j=
\frac{n}{2}+1}^nx_j^2\right].$$
This $f$ satisfies the conditions of the theorem and is normalized
such that $\E f^2(X)=1$ for $X$ a random point on $S^{n-1}$.  Theorem
\ref{sph} gives that
$$d_{TV}(f(X),Z)\le\sqrt{\frac{6}{n}},$$
so in this case, Theorem \ref{sph} gives a rate of convergence to normal.

The cases in which
the bound does not tend to zero as $n\to\infty$  are those in
which a small number of coordinates of $X$ control the value of $f(X)$; in
those situation one would not expect $f$ to be normally
distributed.  For example, if $f(x)=cx_1^2-cx_2^2,$ where $c$
is the proper normalization constant, then because $x_1$ and $x_2$
are asymptotically independent Gaussian random variables (see
\cite{diafree}), $f$ is asymptotically 
distributed as $c(Z_1^2-Z_2^2)$ for independent Gaussians $Z_1$ 
and $Z_2$.

\subsection{Higher degree spherical harmonics}\label{degl}
Fix an odd integer $\ell$.  A canonical example of an eigenfunction
of the spherical Laplacian of degree $\ell$ is the polynomial
$p_\ell(x)$ given by
$$p_\ell(x)=AC^{\frac{n-2}{2}}_\ell(x_n),$$
where $A^2=\frac{(n-3)!\ell!(n+2\ell-2)}{(n+\ell-3)!(n-2)}$ is a normalization
constant chosen such that $\E p_\ell^2(X)=1$ for $X$ a random point of 
$S^{n-1}$, and 
\begin{equation}\label{gegen}
C^k_\ell(t)=\frac{2^\ell}{\Gamma(k)}\sum_{j=0}^{\lfloor \ell/2\rfloor}\frac{
(-1)^j\Gamma(k+\ell-j)}{2^{2j}j!(\ell-2j)!}t^{\ell-2j}\end{equation}
is a Gegenbauer polynomial.  The eigenvalue of this eigenfunction of 
the Laplacian on $S^{n-1}$ is $\ell(n+\ell-2).$

The reason that this particular eigenfunction is natural is that
is arises in the
representation theory of the orthogonal group.  Let $T^{n\ell}$ be the 
irreducible representation of $SO(n)$ in the space
$\r^{n,\ell}/r^2\r^{n,\ell-2}$, where $\r^{n,\ell}$ is the space of 
homogeneous polynomials of degree $\ell$ in $n$ variables.  
The polynomial $p_\ell(x)$ above is the zonal polynomial of degree $\ell$ of 
the 
representation $T^{n\ell}$ relative to the subgroup $SO(n-1)$, where as usual,
$SO(n-1)$ is identified with the subgroup of $SO(n)$ fixing the north pole of
$S^{n-1}$. 
  See \cite{vil} for 
background on spherical harmonics and Gegenbauer polynomials, including all
facts stated above.

Observe that since 
$p_\ell^{(n)}(x)=AC^{\frac{n-2}{2}}_\ell(x_n)$ is an eigenfunction, so
are $p_\ell^{(k)}(x)=AC^{\frac{n-2}{2}}_\ell(x_k)$ for all $1\le k\le n$.  
Let $a_1,\ldots,a_n\subseteq\R$ such that $\sum a_i^2=1$, and consider
the degree $\ell$ eigenfunction
$$p(x)=\sum_{k=1}^na_kp_\ell^{(k)}(x).$$
Note that $\E p^2(X)=1$ for $X$ a random point of $S^{n-1}$, because if
$\ell$ is odd, then the $C^{\frac{n-2}{2}}_\ell(x_k)$ are orthogonal for
different $k$.  In this example, a bound on the total variation distance 
between
the distribution of the random variable $p(X)$ for $X$ random on the sphere
and a standard Gaussian random variable $Z$ is obtained in terms of the $a_i$.
Specifically, 
\begin{thm}
There is a constant $c$, depending on $\ell$, such that 
for $p:\s^{n-1}\to\R$ defined by 
$$\sum_{i=1}^na_ip_\ell^{(i)}(x)$$ 
with $\sum a_i^2=1$, 
$$d_{TV}(p(X),Z)\le c\|{\bf a}\|_4^2.$$
If the vector of coefficients $\{a_i\}$ is chosen to be random and 
uniformly distribtuted on the sphere, then there is another constant $c'$
such that 
$$\E d_{TV}(p(X),Z)\le \frac{c'}{\sqrt{n}}.$$
\end{thm}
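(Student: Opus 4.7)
The plan is to apply Theorem~\ref{absmfld} to $p=\sum_i a_i p_\ell^{(i)}$. Since all the summands share the common eigenvalue $\mu=\ell(n+\ell-2)$, the second error term drops out, and by the Cauchy--Schwarz inequality it suffices to bound $\sqrt{\var(\|\nabla p\|^2)}$, which will then be divided by $\mu\asymp n$.

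The crucial structural observation is that each eigenfunction $p_\ell^{(k)}(x)=AC_\ell^{(n-2)/2}(x_k)$ depends on only a single coordinate. Writing $\phi(t)=\frac{d}{dt}C_\ell^{(n-2)/2}(t)$, the Euclidean gradient is the diagonal vector $\nabla_{\R^n}p(x)=A\sum_k a_k\phi(x_k)e_k$, and projecting onto the tangent space yields
\begin{equation*}
\|\nabla_{\s^{n-1}}p(x)\|^2=\sum_k a_k^2 F_k(x)+\sum_{j\neq k}a_j a_k G_{jk}(x),
\end{equation*}
where $F_k=A^2\phi(x_k)^2(1-x_k^2)$ and $G_{jk}=-A^2 x_j x_k\phi(x_j)\phi(x_k)$. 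Because $\ell$ is odd the Gegenbauer polynomial $C_\ell^{(n-2)/2}$ is odd, so $\phi$ is even and $t\phi(t)$ is odd. The coordinate sign-flip symmetries of the uniform measure on $\s^{n-1}$ then force $\E G_{jk}=0$ and make all cross-expectations between an $F$-term and a $G$-term vanish. Permutation symmetry of the coordinates reduces the resulting variance to three scalar moments $V:=\var(F_1)$, $W:=\mathrm{Cov}(F_1,F_2)$, $U:=\E G_{12}^2$, giving
\begin{equation*}
\var\bigl(\|\nabla p\|^2\bigr)=V\|\mathbf{a}\|_4^4+(W+2U)(1-\|\mathbf{a}\|_4^4).
\end{equation*}

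The main technical step, and the principal obstacle, is verifying that $V$ and $U$ are bounded by a constant depending only on $\ell$, uniformly in $n$; the bound $|W|\le V$ then yields $\var(\|\nabla p\|^2)\le C_\ell(\|\mathbf{a}\|_4^4+1)$. The normalization $A$ is chosen precisely so that $A^2\sim\ell!\,n^{-\ell}$, while the constant term of $\phi$, read off from (\ref{gegen}), is of order $n^{(\ell+1)/2}$, so that $A^2\phi(0)^2$ is of order $n$, matching $\mu$. The sizes of the further Gegenbauer coefficients, combined with the moment formulas in Lemma~\ref{sph-ints} (in particular $\E x_k^{2m}=O(n^{-m})$ on the sphere), make every factor of $n$ cancel in the leading contributions to $V$ and $U$, leaving $O_\ell(1)$. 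The possible difficulty is simply the bookkeeping---one must track enough terms of $\phi$ to certify that no subleading coefficient conspires with the $n$-dependence of the sphere moments to blow up the estimate. Once this is done, Theorem~\ref{absmfld} gives
\begin{equation*}
d_{TV}(p(X),Z)\le\frac{2\sqrt{C_\ell}}{\ell(n+\ell-2)}\bigl(\|\mathbf{a}\|_4^2+1\bigr),
\end{equation*}
and the elementary inequality $\|\mathbf{a}\|_4^4\ge\|\mathbf{a}\|_2^4/n=1/n$ gives $1/n\le\|\mathbf{a}\|_4^2$, collapsing the right-hand side to $c\|\mathbf{a}\|_4^2$ for a suitable $c=c(\ell)$. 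For the second statement, Lemma~\ref{sph-ints} gives $\E a_1^4=3/[n(n+2)]$, so for $\mathbf{a}$ uniform on the sphere one has $\E\|\mathbf{a}\|_4^4=3/(n+2)$, and Jensen's inequality then yields $\E d_{TV}(p(X),Z)\le c\,\E\|\mathbf{a}\|_4^2\le c\sqrt{3/(n+2)}\le c'/\sqrt{n}$.
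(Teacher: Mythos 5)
Your framework—apply Theorem~\ref{absmfld}, note that the second error term vanishes because all eigenvalues are equal, split $\|\nabla_{\s^{n-1}}p\|^2$ via the diagonal structure of $\nabla_{\R^n}p$ into the $F_k$ and $G_{jk}$ pieces, kill cross-terms by the sign-flip symmetries, and reduce everything by exchangeability to the three scalar moments $V,W,U$—is sound and is essentially the paper's own decomposition in different notation. Your handling of the second statement (Jensen plus $\E\|{\bf a}\|_4^4 = n\,\E a_1^4 = 3/(n+2)$) is also fine.

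However, the central quantitative claim, that $V=\var(F_1)$ is bounded by a constant depending only on $\ell$, uniformly in $n$, is false for $\ell\ge3$, and this is not a matter of bookkeeping. In fact $V=\Theta(n^2)$. To see this quickly, take $a=e_1$, so $p=p_\ell^{(1)}$ and $\var(\|\nabla_{\s^{n-1}}p\|^2)=V$. If $V$ were $O(1)$, your chain of inequalities would give $d_{TV}(p_\ell^{(1)}(X),Z)=O(1/n)$, so a single coordinate spherical harmonic of any odd degree would be asymptotically Gaussian. But $p_\ell^{(1)}(X)=A\,C_\ell^{(n-2)/2}(X_1)$, and since $\sqrt{n}\,X_1\Rightarrow Z$ and $t\mapsto A\,C_\ell^{(n-2)/2}(t/\sqrt n)$ converges to a fixed degree-$\ell$ polynomial $q$ (the Hermite-type limit of Gegenbauer polynomials), $p_\ell^{(1)}(X)$ converges in law to $q(Z)$, which for $\ell\ge3$ is not Gaussian. (One can also compute directly; e.g.\ for $\ell=3$, $A^2\phi(x_1)^2\approx\tfrac{3n}{2}(Z^2-1)^2$, giving $V\approx 126\,n^2$.) The correct orders are $V=O(n^2)$, $W=O(n)$, $U=O(1)$; with these your decomposition
\begin{equation*}
\var\bigl(\|\nabla p\|^2\bigr)=V\,\|\mathbf{a}\|_4^4+(W+2U)\bigl(1-\|\mathbf{a}\|_4^4\bigr)
\end{equation*}
gives $\var\lesssim n^2\|\mathbf{a}\|_4^4+n$, and dividing by $\mu^2\asymp n^2$ and using $\|\mathbf{a}\|_4^4\ge1/n$ recovers exactly the claimed bound $c\,\|\mathbf{a}\|_4^2$. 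Your stated intermediate bound $\var\le C_\ell(\|\mathbf{a}\|_4^4+1)$ is simply not true, and the ``final formula'' you derive from it is stronger than the theorem and would be false at $a=e_1$.

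A second, related, gap: the Cauchy--Schwarz bound $|W|\le V$ is correct but useless here, since $V$ is $\Theta(n^2)$ while you need $W=O(n)$. Getting $W=O(n)$ is precisely the delicate cancellation: one must show that $A^4(n-2)^4\,\E\bigl[(C_{\ell-1}^{n/2}(X_1))^2(C_{\ell-1}^{n/2}(X_2))^2\bigr]$ has leading coefficient \emph{exactly} $\ell^2$ in $n^2$ so that it cancels $\mu^2=\ell^2(n+\ell-2)^2$ to leading order. This is the paper's Fact~1, whose proof requires the combinatorial identity
\begin{equation*}
2^{\ell-1}\sum_{k,m=0}^{\frac{\ell-1}{2}}\frac{(-1)^{k+m}\Gamma\bigl(\ell-k-m-\tfrac12\bigr)\pi^{-1/2}}{2^{2k+2m}k!\,m!\,(\ell-1-2k)!\,(\ell-1-2m)!}=\frac{1}{(\ell-1)!},
\end{equation*}
established in the appendix via the Chu--Vandermonde summation. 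Dismissing this as ``bookkeeping'' misses the only genuinely nontrivial ingredient of the proof: if that constant were anything other than $\ell^2$, the cross term $W$ would also be $\Theta(n^2)$ and the bound $c\,\|\mathbf{a}\|_4^2$ would fail for well-spread $\mathbf a$.
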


\medskip

{\bf Remark: }The second statement of the theorem says that, typically, 
spherical harmonics obtained as linear combinations of the $p_\ell^{(i)}(x)$
are approximately Gaussian in distribution.  As in Theorem \ref{sph}, 
it may also be helpful to consider a specific
example.  If $|a_i|=\frac{1}{\sqrt{n}}$ for each $i$, then 
$$d_{TV}(p(X),Z)\le\frac{c}{\sqrt{n}}$$
for a constant $c$ depending only on $\ell$.  As discussed 
after the statment of Theorem \ref{sph}, the bound $\|{\bf a}\|_4^2$ is
not small in the case that only a fixed small number $k$ of $a_i$ are non-zero.
In such a case, $p(X)$ is not distributed as a Gaussian random variable
but as a degree $\ell$ polynomial in $k$ independent Gaussian random 
variables.

\begin{proof}
Recall that Theorem \ref{absmfld} gives 
\begin{equation}\begin{split}\label{error}
d_{TV}(p(X),Z)&\le\frac{1}{\mu}\sqrt{
\var(\|\nabla_{S^{n-1}}p(X)\|^2)}\\&=
\frac{1}{\ell(n+\ell-2)}\sqrt{\E\|\nabla_{S^{n-1}}p(X)\|^4-\ell^2(n+\ell-2)^2},
\end{split}\end{equation}
since $$\E\|\nabla_{S^{n-1}}p(X)\|^2=\ell(n+\ell-2)$$
by Stokes' theorem.
Thus to apply the theorem, we must compute (or bound) 
$$\E\|\nabla_{S^{n-1}}p(X)\|^4.$$
Consider the function $p(x)=\sum_{k=1}^na_kp_
\ell^{(k)}(x)$ on a neighborhood of $S^{n-1}$.  Then for $x\in S^{n-1}$,
$$\|\nabla_{S^{n-1}}p(x)\|^2=\|\nabla p(x)\|^2-\inprod{x}{\nabla p(x)}^2,$$
where $\nabla p(x)$ denotes the usual $\R^n$-gradient of $p$.  Thus
$$\E\|\nabla_{S^{n-1}}p(X)\|^4\le\E\|\nabla p(X)\|^4;$$
bounding the right hand side turns out to be enough.

It is proved in \cite{vil} that the Gegenbauer polynomials 
have the property that
\begin{equation}\label{diff}
\frac{d}{dt}C^p_m(t)=2pC^{p+1}_{m-1}(t),
\end{equation} 
thus
$$\nabla p(x)=\left(Aa_k(n-2)C^{\frac{n}{2}}_{\ell-1}(x_k)\right)_{k=1}^n.$$
It is therefore necessary to estimate
\begin{equation}\label{norm4}\begin{split}
\E&\left[A^2(n-2)^2\sum_{k=1}^na_k^2(C^{\frac{n}{2}}_{\ell-1}(x_k))^2\right]^2
\\&\qquad=A^4(n-2)^4\E\left[\sum_{k=1}^na_k^4
(C^{\frac{n}{2}}_{\ell-1}(x_k))^4+\sideset{}{'}\sum_{k,j=1}^na_k^2a_j^2
(C^{\frac{n}{2}}_{\ell-1}(x_k))^2(C^{\frac{n}{2}}_{\ell-1}(x_j))^2\right],
\end{split}\end{equation}
where $\sideset{}{'}\sum$ stands for summing over distinct indices.

The proof hinges on the following key facts.
\begin{enumerate}
\item For $k\neq j$, 
$$A^4(n-2)^4\E\left[(C^{\frac{n}{2}}_{\ell-1}(x_k))^2(C^{\frac{n}{2}}_{
\ell-1}(x_j))^2\right]=\ell^2\big(n+O(1)\big)^2.$$
\item There is a constant $c$ such that for all $k$, 
$$A^4(n-2)^4\E\left[(C^{\frac{n}{2}}_{\ell-1}(x_k))^4\right]\le cn^2.$$
\end{enumerate}
Using the normalization $\sum a_k^2=1,$
these facts imply that 
$$\E\|\nabla_{S^{n-1}}p(X)\|^4\le\ell^2n^2+c'n^2\sum_{k=1}^n
a_k^4+c''n$$
for constants $c', c''$.  Note that since $\|{\bf a}\|_2=1,$ $\sum a_k^4\ge
\frac{1}{n}$ and so the last term can be absorbed into the middle term.
The result is now immediate from \eqref{error}.

It remains to verify the two key facts above.  By \eqref{gegen},
\begin{equation*}\begin{split}
\E&\left[C^{\frac{n}{2}}_{\ell-1}(X_1)C^{\frac{n}{2}}_{\ell-1}(X_2)\right]^2\\
&=\left(\frac{2^{\ell-1}}{\Gamma\left(\frac{n}{2}\right)}\right)^4\cdot\\&\quad
\sum_{k,m,p,q=0}^{\frac{\ell-1}{2}}\left(\frac{(-1)^{k+m+p+q}\Gamma\left(\ell-1+\frac{n}{2}-k\right)
\Gamma\left(\ell-1+\frac{n}{2}-m\right)\Gamma\left(\ell-1+\frac{n}{2}-p\right)
\Gamma\left(\ell-1+\frac{n}{2}-q\right)}{2^{2k+2m+2p+2q}k!m!p!q!(\ell-1-2k)!
(\ell-1-2m)!(\ell-1-2p)!(\ell-1-2q)!}\cdot\right.\\
&\qquad\qquad\qquad\left.\phantom{\frac{(-1)^k\left(\frac{n}{2}\right)}
{\frac{n}{2}}}\E\left[X_1^{2\ell-2k-2m-2}X_2^{2\ell-2p-2q-2}\right]\right).
\end{split}\end{equation*}
Lemma \ref{sph-ints} gives
\begin{equation*}\begin{split}
\E\Big[X_1^{2\ell-2k-2m-2}&X_2^{2\ell-2p-2q-2}\Big]\\&=
\frac{\Gamma\left(\ell-k-m-\frac{1}{2}\right)\Gamma\left(\ell-p-q-
\frac{1}{2}\right)\Gamma\left(\frac{n}{2}\right)}{\pi\Gamma\left(\frac{n}{2}
+2\ell-k-m-p-q-2\right)}\\
&=\frac{1}{\pi}\Gamma\left(\ell-k-m-\frac{1}{2}\right)\Gamma\left(\ell-p-q-
\frac{1}{2}\right)\left(\frac{2}{n}+O\left(\frac{1}{n^2}\right)\right)^{2\ell-k
-m-p-q-2},
\end{split}\end{equation*}
where here and throughout, $O\left(\frac{1}{n^2}\right),$ $O(1)$, etc. refer
to the limit as $n\to\infty$, with $\ell$ still fixed and the 
constants implicit in the $O$ depending on $\ell$.  It follows that
\begin{equation}\begin{split}\label{gamma-1}
\E&\left[C^{\frac{n}{2}}_{\ell-1}(X_1)C^{\frac{n}{2}}_{\ell-1}(X_2)\right]^2\\
&=\left(\frac{2^{2\ell-2}}{\Gamma^2\left(\frac{n}{2}\right)}\sum_{k,m=0}^{
\frac{\ell-1}{2}}\frac{(-1)^{k+m}\Gamma\left(\ell-1+\frac{n}{2}-k\right)
\Gamma\left(\ell-1+\frac{n}{2}-m\right)\Gamma\left(\ell-k-m-\frac{1}{2}
\right)}{2^{2k+2m}k!m!(\ell-1-2k)!(\ell-1-2m)!\sqrt{\pi}}\cdot\right.\\
&\qquad\qquad\qquad\qquad\qquad\qquad\left.\left(\frac{2}{n}+
O\left(\frac{1}{n^2}\right)\right)^{\ell-k-m-1}\right)^2.
\end{split}\end{equation}
Now,
\begin{equation}\label{gammas}
\Gamma\left(\ell-1+\frac{n}{2}-k\right)=\Gamma\left(\frac{n}{2}\right)
\left(\frac{n}{2}+O(1)\right)^{\ell-k-1},\end{equation}
so the right-hand side of \eqref{gamma-1} reduces to
\begin{equation*}
\left(\big(2n+O(1)\big)^{\ell-1}\sum_{k,m=0}^{\frac{\ell-1}{2}}\frac{(-1)^{k+m}
\Gamma\left(\ell-k-m-\frac{1}{2}\right)\pi^{-1/2}}
{2^{2k+2m}k!m!(\ell-1-2k)!(\ell-1-2m)!}\right)^2
\end{equation*}

Recall that 
$$A^2=\frac{(n-3)!\ell!(n+2\ell-2)}{(n+\ell-3)!(n-2)}=\ell!\big(n+O(1)\big)^{
-\ell},$$
thus
\begin{equation*}\begin{split}
A^4(n-2)^4\E&\left[C^{\frac{n}{2}}_{\ell-1}(X_1)C^{\frac{n}{2}}_{
\ell-1}(X_2)\right]^2\\&=(\ell!)^2\big(n+O(1)\big)^2\left(2^{\ell-1}
\sum_{k,m=0}^{\frac{\ell-1}{2}}\frac{(-1)^{k+m}
\Gamma\left(\ell-k-m-\frac{1}{2}\right)\pi^{-1/2}}
{2^{2k+2m}k!m!(\ell-1-2k)!(\ell-1-2m)!}\right)^2.
\end{split}\end{equation*}

{\bf Claim:} For $\ell\in\N$ odd, $$2^{\ell-1}
\sum_{k,m=0}^{\frac{\ell-1}{2}}\frac{(-1)^{k+m}
\Gamma\left(\ell-k-m-\frac{1}{2}\right)\pi^{-1/2}}
{2^{2k+2m}k!m!(\ell-1-2k)!(\ell-1-2m)!}=\frac{1}{(\ell-1)!}.$$

A proof of this claim is given in the
appendix.  Making use of the claim, it is 
immediate that
$$A^4(n-2)^4\E\left[C^{\frac{n}{2}}_{\ell-1}(X_1)C^{\frac{n}{2}}_{
\ell-1}(X_2)\right]^2=\ell^2\big(n+O(1)\big)^2.$$

\medskip

It remains to verify fact 2: there is a constant $c$ such that for all
$k$,
$$A^4(n-2)^4\E\left[(C^{\frac{n}{2}}_{\ell-1}(x_k))^4\right]\le cn^2.$$
In fact, this is much easier than the previous part since the exact asymptotic
constant is not needed.  Proceeding as before,
\begin{equation*}\begin{split}
\E&\left[C^{\frac{n}{2}}_{\ell-1}(X_1)\right]^4\\
&=\left(\frac{2^{\ell-1}}{\Gamma\left(\frac{n}{2}\right)}\right)^4\cdot\\&\quad
\sum_{k,m,p,q=0}^{\frac{\ell-1}{2}}\left(\frac{(-1)^{k+m+p+q}\Gamma\left(\ell-1+\frac{n}{2}-k\right)
\Gamma\left(\ell-1+\frac{n}{2}-m\right)\Gamma\left(\ell-1+\frac{n}{2}-p\right)
\Gamma\left(\ell-1+\frac{n}{2}-q\right)}{2^{2k+2m+2p+2q}k!m!p!q!(\ell-1-2k)!
(\ell-1-2m)!(\ell-1-2p)!(\ell-1-2q)!}\cdot\right.\\
&\qquad\qquad\qquad\left.\phantom{\frac{(-1)^k\left(\frac{n}{2}\right)}
{\frac{n}{2}}}\E\left[X_1^{4\ell-2k-2m-2p-2q-4}\right]\right).
\end{split}\end{equation*}
From Lemma \ref{sph-ints},
\begin{equation*}\begin{split}
\E\Big[X_1^{4\ell-2k-2m-2p-2q-4}\Big]&=
\frac{\Gamma\left(2\ell-k-m-p-q-\frac{3}{2}\right)
\Gamma\left(\frac{n}{2}\right)}{\pi\Gamma\left(\frac{n}{2}
+2\ell-k-m-p-q-2\right)}\\
&=\frac{1}{\pi}\Gamma\left(2\ell-k-m-p-q-\frac{3}{2}\right)
\left(\frac{2}{n}+O\left(\frac{1}{n^2}\right)\right)^{2\ell-k
-m-p-q-2},
\end{split}\end{equation*}
and so
\begin{equation*}\begin{split}
\E&\left[C^{\frac{n}{2}}_{\ell-1}(X_1)\right]^4\\
&=\left(2^{4\ell-4}\sum_{k,m,p,q=0}^{
\frac{\ell-1}{2}}\frac{(-1)^{k+m+p+q}\Gamma\left(2\ell-k-m-p-q-\frac{3}{2}
\right)\left(\frac{n}{2}+
O\left(1\right)\right)^{2\ell-2}}{2^{2k+2m+2p+2q}k!m!p!q!(\ell-1-2k)!(\ell-1-2m)!(\ell-1-2p)!
(\ell-1-2q)!\pi}\right),
\end{split}\end{equation*}
making use again of \eqref{gammas}.
Since 
$$A^4=(\ell!)^2\left(n+O(1)\right)^{-2\ell},$$
it follows that 
$$A^4(n-2)^4\E\left[C^{\frac{n}{2}}_{\ell-1}(X_1)\right]^4=\big(n+O(1)\big)^2
\cdot C,$$
where $C$ (which depends on $\ell$) is independent of $n$. 
\end{proof}

\section{The torus}\label{torus}
This section deals with the value distributions of eigenfunctions
on flat tori.  The class of functions considered here are random functions;
that is, they are linear combinations of eigenfunctions with random
coefficients.

Let $(M,g)$ be the torus $\T^n=\R^n/\Z^n$, with the metric
given by the symmetric positive-definite bilinear form $B$: 
$$(x,y)_B=\inprod{Bx}{y}.$$  With this metric, the Laplacian $\Delta_B$
on $\T^n$ is given by
$$\Delta_Bf(x)=\sum_{j,k}(B^{-1})_{jk}\frac{\partial^2f}{\partial x_j\partial
x_k}(x).$$
Eigenfunctions of $\Delta_B$ are given by the real and imaginary parts
of functions of the form 
$$f_v(x)=e^{2\pi i(v,x)_B}=e^{2\pi i\inprod{Bv}{x}},$$
for vectors $v\in\R^n$ such that $Bv$ has integer components, with
corresponding eigenvalue $-\mu_v=-(2\pi \|v\|_B)^2.$

Consider a linear combination of eigenfunctions with random
coefficients:
$$f(x):=\Re\left(\sum_{v\in\V}a_ve^{2\pi i\inprod{Bv}{x}}\right),$$
where $\V$ is a finite collection of vectors $v$ such that 
$Bv$ has integer components for each $v\in\V$ and
$\{a_v\}_{v\in\V}$ is a random vector on the sphere of radius
$\sqrt{2}$ in $\R^\V$.  Assume further that $v+w\neq 0$ for $v,w\in\V$.
Then the following theorem holds.
\begin{thm}\label{torusthm}
For a random function $f$ on $(\T^n,B)$ defined as above and for $\mu>0$,
$$\E d_{TV}(f(X),Z)\le\frac{1}{|\mu|}\left[\sqrt{\frac{8(2\pi)^4}
{|\V|(|\V|+2)}\sum_{v,w\in\V}(v,w)_B^2}+
\left(2\sqrt{2}+\sqrt{\pi}\right)\sqrt{\frac{1}{|\V|}
\sum_{v\in\V}(\mu_v-\mu)^2}\right].$$
\end{thm}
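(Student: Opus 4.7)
The approach is to apply Theorem \ref{absmfld} conditionally on the random coefficients $\{a_v\}$ and then take expectation over the sphere. First I identify the setup: the functions $\tilde f_v(x):=\sqrt{2}\cos(2\pi\langle Bv,x\rangle)$ are $L^2$-normalized eigenfunctions of $\Delta_B$ with eigenvalue $-\mu_v=-(2\pi\|v\|_B)^2$, and for distinct $v,w\in\V$ they are orthogonal thanks to the hypothesis $v+w\neq 0$ in $\V$ (the cross term that would arise from $v=-w$ is ruled out). Writing $f=\sum_v(a_v/\sqrt{2})\tilde f_v$ gives $\sum_v(a_v/\sqrt{2})^2=1$ because $\|a\|=\sqrt{2}$, so Theorem \ref{absmfld} applies with coefficients $b_v:=a_v/\sqrt{2}$.

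Conditionally on $\{a_v\}$, Theorem \ref{absmfld} yields a bound with two error pieces. For the eigenvalue-spread piece $\sqrt{\sum_v b_v^2(\mu_v-\mu)^2}$ I take $\E_a$ and apply Jensen's inequality together with $\E a_v^2=2/|\V|$, which produces exactly the factor $\sqrt{(1/|\V|)\sum_v(\mu_v-\mu)^2}$ appearing in the target inequality. For the gradient piece, two successive applications of Jensen's inequality give
$$\E_a\E_X\left|\|\nabla f\|^2-\E_X\|\nabla f\|^2\right|\;\le\;\sqrt{\E_a\,\var_X\|\nabla f\|^2},$$
so it suffices to estimate $\E_a\var_X\|\nabla f\|^2$ by the square of the first term in the target bound.

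The technical core is that variance computation. I would begin by computing the $B$-gradient explicitly, $\nabla_B f(x)=-2\pi\sum_v a_v\sin(2\pi\langle Bv,x\rangle)\,v$, yielding
$$\|\nabla f\|^2=4\pi^2\sum_{v,w\in\V}a_va_w(v,w)_B\sin(2\pi\langle Bv,x\rangle)\sin(2\pi\langle Bw,x\rangle).$$
Product-to-sum identities reduce integrals over $X$ of products of two and four sines to character orthogonality on $\T^n$; this gives $\E_X\|\nabla f\|^2=2\pi^2\sum_v a_v^2\|v\|_B^2$ and turns $\var_X\|\nabla f\|^2$ into a quartic polynomial in the $a_v$ with coefficients depending only on the $(v,w)_B$ and on which cosine frequencies $B(v\pm w\pm v'\pm w')$ vanish. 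Taking $\E_a$ then kills all index patterns $(v,w,v',w')$ that do not match in pairs (by the $a\mapsto-a$ symmetry of the sphere) and evaluates the survivors using $\E a_v^4=12/[|\V|(|\V|+2)]$ and $\E a_v^2a_w^2=4/[|\V|(|\V|+2)]$ for $v\neq w$.

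The main obstacle is the bookkeeping at this last step. Three index patterns survive: the fully diagonal $v=w=v'=w'$ and the two pair configurations $(v=v',\,w=w')$ and $(v=w',\,w=v')$, each contributing $(v,w)_B^2$ weighted by a factor of order $1/[|\V|(|\V|+2)]$. After expanding, the pair sums restricted to $v\neq w$ must be completed to full sums over $v,w\in\V$ by adding and subtracting the diagonal; I expect the resulting expression to be dominated by $\frac{32\pi^4}{|\V|(|\V|+2)}\sum_{v,w\in\V}(v,w)_B^2$ once the negative diagonal correction is discarded. Taking the square root and combining with the factor $2/\mu$ inherited from Theorem \ref{absmfld} reproduces exactly the first term $\frac{1}{|\mu|}\sqrt{\frac{8(2\pi)^4}{|\V|(|\V|+2)}\sum_{v,w}(v,w)_B^2}$ of the target bound. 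Careful attention to signs in the product-to-sum expansion and to the multiplicities of the pair patterns is the place where a miscounted constant would most likely creep in.
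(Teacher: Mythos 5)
Your proposal is correct and follows essentially the same path as the paper: apply Theorem \ref{absmfld} conditionally on the random coefficients, bound the gradient term via Jensen by $\sqrt{\E_a\var_X\|\nabla_B f\|_B^2}$, compute the fourth moment of the gradient using orthogonality of torus characters together with the sphere moments $\E a_v^4=12/[|\V|(|\V|+2)]$ and $\E a_v^2a_w^2=4/[|\V|(|\V|+2)]$, and handle the eigenvalue-spread term by Jensen and $\E a_v^2=2/|\V|$. The only cosmetic differences are that you work with sines and product-to-sum identities where the paper keeps complex exponentials, and that by carefully passing through the $L^2$-normalized eigenfunctions $\tilde f_v=\sqrt{2}\cos(2\pi\langle Bv,x\rangle)$ and coefficients $b_v=a_v/\sqrt{2}$ you in fact obtain the slightly sharper coefficient $\bigl(2+\sqrt{\pi/2}\bigr)$ in place of the paper's $\bigl(2\sqrt{2}+\sqrt{\pi}\bigr)$ on the second error term.
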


The idea is that the eigenvalues $-\mu_v=-(2\pi \|v\|_B)^2$ 
corresponding to the
summands be clustered about the value $\mu$ and that the size of 
$\V$ be large.  In
some cases, e.g. for $B=I$, it may be possible to eliminate the 
second error term by choosing a large set $\V$
such that all elements have the same $B$-norm, that is, it may be that 
the eigenspaces of the Laplacian $\Delta_B$ have large dimension.  However,
this need not be true in general.  The presence of the second error 
term is to take into account that while it may not be the case that eigenspaces
are large, it may be that there are many eigenspaces of low dimension whose 
eigenvalues are all close together.

Note that requiring the first term of the error to be small is 
a weakening of an orthogonality requirement on the elements of $\V$.
By the Cauchy-Schwarz inequality, 
$$\sum_{v,w\in\V}(v,w)_B^2\le\sum_{v,w\in\V}\|v\|_B^2\|w\|_B^2\le
\left(\max_{v\in\V}\|v\|_B^4\right)|\V|^2.$$
If all of the vectors $v\in\V$ have $\mu_v\approx\mu,$ then this 
last expression is approximately $\frac{\mu^2}{(2\pi)^4}|\V|^2,$
and the first error term is bounded by $2\sqrt{2}$.  This is of course
worthless, since total variation distance is always bounded by one.  On the
other hand, if the elements of $\V$ are mutually orthogonal, then 
$$\sum_{v,w\in\V}(v,w)_B^2=\sum_{v\in\V}\|v\|_B^4\approx\frac{\mu^2}{(2\pi)^4}
|\V|,$$
and then the first error term is bounded by $\frac{2\sqrt{2}}{\sqrt{|\V|}},$
which is small as long as $\V$ is large.  The idea is to aim for a 
set $\V$ somewhere in between these two extremes: not to require the 
elements of $\V$ to be orthogonal, but to choose them to be fairly
evenly distributed on the $B$-norm sphere on which they lie.

\noindent{\bf Examples.}
\begin{enumerate}
\item Let $B=I$.  Choose $\V$ to be the set of vectors in $\R^n$ with 
two entries equal to one and all others zero.  Then $|\V|=\binom{n}{2}$  
and $\|v\|^2=2$ for each
$v\in\V$.  Given a fixed $v\in\V$, there are exactly $2n-3$ elements
$w\in\V$ (including $v$ itself) such that $\inprod{v}{w}\neq0$.  Thus
$$\frac{1}{|\V|(|\V|+2)}\sum_{v,w\in\V}\inprod{v}{w}^2\le\frac{1}{\binom{n}{2}
\left(\binom{n}{2}+2\right)}\cdot\binom{n}{2}(4)(2n-3)=\frac{16n-12}{n^2-n+4},$$
thus in this case there is a constant $c$ such that
$$\E d_{TV}(f(X),Z)\le\frac{c}{\sqrt{n}}.$$

\item Let $B=diag(1+\delta_1,\ldots,1+\delta_n)$, with $\delta_i$ small
real numbers.  Let $\V$ be the set of vectors of the 
previous example.  For $v\in\V$, let $v'=\left(\frac{1}{1+\delta_1}v_1,
\ldots,\frac{1}{1+\delta_n}v_n\right)$; $v'$ is constructed such that
$Bv'$ has integer components.  Let $\V'=\{v':v\in\V\}.$

Now, let $\epsilon=\max_i\left|1-\frac{1}{1+\delta_i}\right|$.  Then
\begin{equation*}
\left|\|v'\|_B^2-2\right|=\left|\sum_{i=1}^n\frac{1}{1+\delta_i}v_i^2-
\sum_{i=1}^nv_i^2\right|\le\epsilon\|v\|^2=2\epsilon.
\end{equation*}
By construction, $(v,w)_B\neq 0$ if and only if $\inprod{v}{w}\neq 0$, and
in those cases, $$(v,w)_B^2\le\|v\|_B^2\|w\|_B^2\le (2+2\epsilon)^4.$$
Taking $\mu=2(2\pi)^2$ in the theorem, it follows that for $f$
defined as above using this matrix $B$ and index set $\V'$, there
is a constant $C$ so that
$$\E d_{TV}(f(X),Z)\le C\max\left(\frac{1}{\sqrt{n}},\epsilon\right).$$

\end{enumerate}

\medskip

\begin{proof}[Proof of Theorem \ref{torusthm}]
To apply Theorem \ref{absmfld}, an expression for $\|\nabla_B f\|_B^2$ 
is needed.  
\begin{equation*}\begin{split}
\nabla_Bf(x)&=\left\{\Re\left(\sum_{j=1}^n\sum_{v\in\V}(2\pi i)a_v(Bv)_j
(B^{-1})_{jk}e^{2\pi i\inprod{Bv}{x}}\right)\right\}_{k=1}^n\\
&=-\Im\left(\sum_{v\in\V}(2\pi)a_ve^{2\pi i\inprod{Bv}{x}}v\right),
\end{split}\end{equation*}
using the fact that $B$ is symmetric.

It follows that
\begin{equation}\begin{split}
\|\nabla_Bf(x)\|_B^2&=\sum_{j,k}B_{jk}\left(\nabla_Bf(x)\right)_j\left(
\nabla_Bf(x)\right)_k\\&=\sum_{j,k}B_{jk}\Im\left(\sum_{v\in\V}(2\pi)a_v
e^{2\pi i\inprod{Bv}{x}}v_j\right)\Im\left(\sum_{w\in\V}(2\pi)a_we^{2\pi i
\inprod{Bw}{x}}w_k\right)\\&=\frac{1}{2}\Re\left[\sum_{v,w\in\V}4\pi^2a_va_w
\left(\sum_{j,k}B_{jk}v_jw_k\right)\left(e^{2\pi i\inprod{Bv-Bw}{x}}-
e^{2\pi i\inprod{Bv+Bw}{x}}\right)\right]\\&=\frac{1}{2}\Re\left[\sum_{v,w\in
\V}4\pi^2a_va_w(v,w)_B\left(e^{2\pi i\inprod{Bv-Bw}{x}}-
e^{2\pi i\inprod{Bv+Bw}{x}}\right)\right].\label{grad2}
\end{split}\end{equation}
Let $X$ be a randomly distributed point on the torus.  
The first half of the error term in Theorem \ref{absmfld} is
\begin{equation*}\begin{split}
\E\left|\|\nabla_B f(X)\|_B^2-\E_X\|\nabla_B f(X)\|_B^2\phantom{\sum_i}
\hspace{-5.5mm}\right|&\le\sqrt{\E\|\nabla_Bf(X)\|_B^4-\E\left(\E_X
\|\nabla_Bf(X)\|_B^2\right)^2}.
\end{split}\end{equation*}
Start by computing $\E\|\nabla_Bf(X)\|_B^4.$  From above,
\begin{equation*}\begin{split}
\|\nabla_Bf(x)\|_B^4&=2\pi^4\Re\left[\sum_{v,w,v',w'}a_va_wa_{v'}a_{w'}
(v,w)_B(v',w')_B\right.\\&
\qquad\qquad\qquad\Big[e^{2\pi i\inprod{Bv-Bw-Bv'+Bw'}{x}}-e^{2\pi i\inprod{
Bv-Bw-Bv'-Bw'}{x}}+e^{2\pi i\inprod{Bv-Bw+Bv'-Bw'}{x}}\\&
\qquad\qquad\qquad-e^{2\pi i\inprod{
Bv-Bw+Bv'+Bw'}{x}}-e^{2\pi i\inprod{Bv+Bw-Bv'+Bw'}{x}}+e^{2\pi i\inprod{
Bv+Bw-Bv'-Bw'}{x}}\\&\left.\qquad\qquad\qquad\phantom{\sum_v}
-e^{2\pi i\inprod{Bv+Bw+Bv'-Bw'}{x}}+e^{2\pi i\inprod{
Bv+Bw+Bv'+Bw'}{x}}\Big]\right].
\end{split}\end{equation*}
Averaging over the coefficients $\{a_v\}$ gives
\begin{equation*}\begin{split}
\E_a\|\nabla_Bf(x)\|_B^4=\frac{8\pi^4}{|\V|(|\V|+2)}\Re&\left[3
\sum_{v\in\V}\|v\|_B^4\left(3-e^{-4\pi i\inprod{Bv}{x}}-3e^{4\pi i\inprod{Bv}
{x}}+e^{8\pi i\inprod{Bv}{x}}\right)\right.\\&\quad+\sideset{}{'}
\sum_{v,v'\in\V}\|v\|_B^2\|v'\|_B^2\left(2-2e^{4\pi i\inprod{Bv}{x}}
-e^{-4\pi i\inprod{Bv'}{x}}-e^{4\pi i\inprod{Bv'}{x}}\right.\\
&\qquad\qquad\qquad\qquad\qquad\left.+e^{4\pi i\inprod{Bv-
Bv'}{x}}+e^{4\pi i\inprod{Bv+Bv'}{x}}\right)\\&\quad+2\sideset{}{'}\sum_{v,w
\in\V}(v,w)_B^2\left(2-2e^{4\pi i\inprod{Bv}{x}}-e^{-4\pi i\inprod{Bw}{x}}
-e^{4\pi i\inprod{Bw}{x}}\right.\\&\qquad\qquad\qquad\left.\phantom{
\sum_{v\in\V}}\left.
+e^{4\pi i\inprod{Bv-Bw}{x}}+e^{4\pi i\inprod{Bv+Bw}{x}}\right)\right].
\end{split}\end{equation*}
Now averaging over a random point $X$ of the torus gives
\begin{equation*}\begin{split}
\E\|\nabla_Bf(X)\|_B^4&=\frac{8\pi^4}{|\V|(|\V|+2)}\left[9\sum_{v\in\V}
\|v\|_B^4+2\sideset{}{'}\sum_{v,w\in\V}\|v\|_B^2\|v'\|_B^2+4\sideset{}{'}
\sum_{v,w\in\V}(v,w)_B^2\right]\\&=\frac{8\pi^4}{|\V|(|\V|+2)}\left[3
\sum_{v\in\V}\|v\|_B^4+2\left(\sum_{v\in\V}\|v\|_B^2\right)^2+4
\sum_{v,w\in\V}(v,w)_B^2\right].
\end{split}\end{equation*}

\medskip

Next,
\begin{equation*}\begin{split}
\E\Big[\E_X\|\nabla_Bf(X)\|_B^2\Big]^2&=
\E\left[2\pi^2\sum_{v\in\V}a_v^2\|v\|_B^2\right]^2\\&=\frac{(2\pi)^4}{|\V|
(|\V|+2)}\left[3\sum_{v\in\V}\|v\|_B^4+\sideset{}{'}\sum_{v,w\in\V}\|v\|_B^2
\|w\|_B^2\right]\\&=\frac{(2\pi)^4}{|\V|(|\V|+2)}\left[\left(\sum_{v\in\V}
\|v\|_B^2\right)^2+2\sum_{v\in\V}\|v\|_B^4\right],
\end{split}\end{equation*}
thus
\begin{equation*}\begin{split}
&\sqrt{\E\|\nabla_Bf(X)\|_B^4-\E\left(\E_X\|\nabla_Bf(X)\|_B^2\right)^2}
\\&
\qquad\qquad\qquad\le
\sqrt{\frac{(2\pi)^4}{|\V|(|\V|+2)}\left(2\sum_{v,w\in\V}(v,w)_B^2-\frac{1}{2}
\sum_{v\in\V}\|v\|_B^4\right)}\\&\qquad\qquad\qquad\le\sqrt{\frac{2(2\pi)^4}
{|\V|(|\V|+2)}\sum_{v,w\in\V}(v,w)_B^2}.
\end{split}\end{equation*}
\medskip

Finally, the second error term of Theorem \ref{absmfld} is
a multiple of $$\sqrt{\sum_{v\in\V}a_v^2(\mu_v-\mu)^2},$$
where $\mu_v=(2\pi\|v\|_B)^2$ and $\mu=\frac{1}{|\V|}\sum_{v\in\V}\mu_v.$
By H\"older's inequality,
$$\E\sqrt{\sum_{v\in\V}a_v^2(\mu_v-\mu)^2}\le\sqrt{\frac{2}{|\V|}
\sum_{v\in\V}(\mu_v-\mu)^2}.$$

This completes the proof.
\end{proof}

\section{Appendix}
This section contains the proof that for $\ell$ odd,
\begin{equation}\label{conjec}2^{\ell-1}
\sum_{k,m=0}^{\frac{\ell-1}{2}}\frac{(-1)^{k+m}
\Gamma\left(\ell-k-m-\frac{1}{2}\right)\pi^{-1/2}(\ell-1)!}
{2^{2k+2m}k!m!(\ell-1-2k)!(\ell-1-2m)!}=1.\end{equation}
For $n\in\N,$ let $n!!=n(n-2)(n-4)\cdots(1,2)$, where the last factor
is 1 or 2 depending on the parity of $n$.
Using the fact that $\Gamma(x)=(x-1)\Gamma(x-1)$ together with 
$\Gamma\left(\frac{1}{2}\right)=\sqrt{\pi},$ the left-hand side of 
\eqref{conjec} is 

\begin{equation}\begin{split}\label{nogamma}
\sum_{k,m=0}^{\frac{\ell-1}{2}}&\frac{(-1)^{k+m}(2\ell-2k-2m-3)!!(\ell-1)!}
{2^{k+m}k!m!(\ell-2k-1)!(\ell-2m-1)!}\\&=\sum_{k,m=0}^{\frac{\ell-1}{2}}
\frac{(-1)^{k+m}(2\ell-2k-2m-2)!(\ell-1)!}
{2^{k+m}k!m!(\ell-2k-1)!(\ell-2m-1)!(2\ell-2k-2m-2)!!}.
\end{split}\end{equation}
Next, let $p=\frac{\ell-1}{2}.$  Then \eqref{nogamma} becomes
\begin{equation*}\begin{split}
\sum_{k,m=0}^p&\frac{(-1)^{k+m}(4p-2k-2m)!(2p)!}{2^{k+m}k!m!(2p-2k)!(2p-2m)!
(4p-2k-2m)!!}\\&=2^{-2p}\sum_{k,m=0}^p\frac{(-1)^{k+m}(4p-2k-2m)!(2p)!}{
k!m!(2p-2k)!(2p-2m)!(2p-k-m)!}\\
&=2^{-2p}\sum_{k=0}^p\frac{(-1)^k(2p)!}{k!(2p-2k)!}\left(\sum_{m=0}^p
\frac{(-1)^m(4p-2k-2m)!}{m!(2p-2m)!(2p-k-m)!}\right),
\end{split}\end{equation*}
and the goal is to prove that this last expression is equal to 1.
We do this by showing 
\begin{equation}\label{hypergeom}
\sum_{m=0}^p\frac{(-1)^m(4p-2k-2m)!}{m!(2p-2m)!(2p-k-m)!}=\begin{cases}
2^{2p}&k=0\\0&k=1,\ldots,p.\end{cases}
\end{equation}
This can be done by a computer algebra system such as Maple; 
the following direct proof is due to Geir Helleloid \cite{geir}.

Let $(\alpha)_n$ be the shifted 
factorial
$$(\alpha)_n=\alpha(\alpha+1)\cdots(\alpha+n-1).$$
The following two straightforward identities involving shifted factorials 
will be useful:
$$(n-k)!=\frac{(-1)^kn!}{(-n)_k}$$
and
$$(a)_{2k}=2^{2k}\left(\frac{a}{2}\right)_k\left(\frac{a+1}{2}\right)_k.$$
The hypergeometric series
$$\sum_{n=0}^\infty\frac{(a_1)_n\cdots(a_p)_n}{(b_1)_n\cdots(b_q)_n}\cdot
\frac{x^n}{n!}$$
is denoted
$${_p}F_q=(a_1,\ldots,a_p;b_1,\ldots,b_q;x),$$
with the $x$ omitted if $x=1$.
The {\em Chu-Vandermonde} identity is
$${_2}F_1(-n,-b;c)=\frac{(c+b)_n}{(c)_n}.$$
The proof of \eqref{hypergeom} is as follows:
\begin{eqnarray*}
\sum_{m=0}^p\frac{(-1)^m(4p-2k-2m)!}{m!(2p-2m)!(2p-k-m)!}&=&
\sum_{m=0}^p\frac{(-1)^m(-1)^{2m}(4p-2k)!/(-4p+2k)_{2m}}{m!((-1)^{2m}
(2p)!/(-2p)_{2m})((-1)^m(2p-k)!/(-2p+k)_m)}\\
&=&\frac{(4p-2k)!}{(2p)!(2p-k)!}\sum_{m=0}^p\frac{(-2p)_{2m}(-2p+k)_m}
{m!(-4p+2k)_{2m}}\\&=&\frac{(4p-2k)!}{(2p)!(2p-k)!}\sum_{m=0}^p
\frac{2^{2m}(-p)_m\left(\frac{-2p+1}{2}\right)_m(-2p+k)_m}{m!2^{2m}(-2p+
k)_m\left(\frac{-4p+2k+1}{2}\right)_m}\\&=&\frac{(4p-2k)!}{(2p)!(2p-k)!}
\sum_{m=0}^p
\frac{(-p)_m\left(\frac{-2p+1}{2}\right)_m}{m!\left(\frac{-4p+2k+1}
{2}\right)_m}\\&=&\frac{(4p-2k)!}{(2p)!(2p-k)!}
\sum_{m=0}^\infty\frac{(-p)_m\left(\frac{-2p+1}{2}\right)_m}{m!\left(
\frac{-4p+2k+1}{2}\right)_m}\\&=&\frac{(4p-2k)!}{(2p)!(2p-k)!}{_2}F_1
\left(-p,\frac{-2p+1}{2};\frac{-4p+2k+1}{2}\right)\\&=&\frac{(4p-2k)!}
{(2p)!(2p-k)!}\cdot\frac{(k-p)_p}{\left(\frac{-4p+2k+1}{2}\right)_p}\\
&=&\frac{(4p-2k)!}{(2p)!(2p-k)!}\cdot\frac{(k-p)\cdots(k-1)(-2)^p}{
(4p-2k-1)(4p-2k-3)\cdots(2p-2k+1)}.
\end{eqnarray*}
Now, for $k=1,\ldots,p$, this last expression clearly vanishes.  For $k=0,$
the last line reduces to
\begin{eqnarray*}
\frac{(4p)!\,(p)!\,2^p}{(2p)!\,(2p)!\,(4p-1)(4p-3)\cdots(2p+1)}&=&
\frac{(4p)!\,(p)!\,2^p\,(2p-1)!!}{(2p)!\,(2p)!\,(4p-1)!!}\\&=&\frac{2^{3p}\,
(p)!\,(2p-1)!!}{(2p)!}\\&=&2^{2p}.
\end{eqnarray*}
This completes the proof.
\bibliographystyle{plain}
\bibliography{eigenfunctions}

\end{document}